\pgfplotsset{compat=1.18}
\def\hmath$#1${\texorpdfstring{{\rmfamily\textit{#1}}}{#1}}
\setlist{nosep}
\newtheorem{theorem}{Theorem}[section]
\newtheorem{lemma}[theorem]{Lemma}
\newtheorem{proposition}[theorem]{Proposition}
\newtheorem{remark}[theorem]{Remark}
\theoremstyle{definition}
\newtheorem*{timescaling*}{Time-Scaling}
\newcommand{\R}{\mathbb{R}}
\newcommand{\N}{\mathbb{N}}
\newcommand{\Z}{\mathbb{Z}}
\newcommand{\F}{\mathcal{F}}
\newcommand{\norm}[1]{\left\lVert#1\right\rVert}
\newcommand{\inftynorm}[1]{\left\lVert#1\right\rVert_{L^\infty}}
\newcommand{\ltwonorm}[1]{\left\lVert#1\right\rVert_{L^2}}
\newcommand{\lpnorm}[2]{\left\lVert#1\right\rVert_{L^{#2}}}
\newcommand{\abs}[1]{\left\lvert#1\right\rvert}
\DeclareMathOperator\supp{supp}
\newcommand{\absxi}{\abs{\xi}}
\newcommand{\eps}{\varepsilon}
\newcommand{\absnabla}{\abs{\nabla}}
\newcommand{\blue}[1]{\textcolor{black}{#1}}
\newcommand{\AlignFootnote}[1]{%
    \ifmeasuring@
    \else
        \footnote{#1}%
    \fi
}
\let\oldtocsection=\tocsection
\let\oldtocsubsection=\tocsubsection
\renewcommand{\tocsection}[2]{\hspace{0em}\oldtocsection{#1}{#2}}
\renewcommand{\tocsubsection}[2]{\hspace{1em}\oldtocsubsection{#1}{#2}}
 \def\l@subsection{\@tocline{2}{0pt}{4pc}{6pc}{}}
\def\l@subsubsection{\@tocline{3}{0pt}{8pc}{8pc}{}}
\numberwithin{equation}{section}
\title[Stratification in the 2D inviscid Boussinesq system]{The effect of \blue{linear} stratification on the stability of a rest state in the 2D inviscid Boussinesq system }
\author[C. Jurja]{Catalina Jurja}
\address{Institute of Mathematics, University of Zurich, Zurich, Switzerland}
\email{catalina.jurja@math.uzh.ch}
\author[H. Ko]{Haram Ko}
\address{Brown University, Providence, RI, USA}
\email{haram\_ko@brown.edu}
\subjclass[2020]{35Q35, 35Q86, 35B35, 76B55, 76B15, 76B70, 76E20}
\keywords{Boussinesq equations, nonlinear stability, surface quasi-geostrophic equation, Strichartz estimates, stratified flow, dispersion}
\begin{document}
\begin{abstract}
 We investigate and quantify the effect of linear stratification on the stability time of a stably stratified rest state for the 2D inviscid Boussinesq system on $\R^2$. As an important consequence, we obtain stability of the steady state starting from an $\eps$-sized initial perturbation of Sobolev regularity $H^{3^+}$ on a timescale $\mathcal{O}(\eps^{-4/3})$. 

 In our setting, the linear stratification induces dispersion and at the core of our approach lies a suitable bootstrap framework based on Strichartz estimates. This allows us to keep \textit{only $L^2$-based regularity} assumptions on the initial perturbation, whereas previous works impose additional localizations to achieve this timescale. 

 We prove the analogous result for the related dispersive SQG equation.
\end{abstract}
\vspace*{-11pt}
\maketitle
\setcounter{tocdepth}{1}
\vspace{-1cm}
\tableofcontents   
\usetagform{blue}

\begin{section}{Introduction}

The purpose of this paper is to study the stability of a particular steady state of the 2D inviscid Boussinesq system:
\begin{equation}\label{eqn: BQ}
\left\{
      \begin{aligned}
 &\partial_t v +v\cdot \nabla v=-\nabla p -\kappa\varrho\vec{e_2},\\
 &\partial_t\varrho+v\cdot \nabla\varrho=0, \\
    &\mathrm{div} \hspace{1mm} v=0.
\end{aligned}
    \right.
\end{equation}
These equations describe the motion of an incompressible fluid of velocity $v:\R^+\times \R^2\to \R^2$, pressure $p:\R^+\times \R^2\to \R$ and density $\varrho:\R^+\times \R^2\to \R$  under the influence of gravity. The gravity force is chosen to be pointing downwards in the $-\vec{e_2}$ direction with constant of gravity $\kappa>0$.  The system \eqref{eqn: BQ} arises from the Boussinesq approximation of the 3D inhomogeneous Euler equation, see \cite[\textcolor{MidnightBlue}{\S 2.4}]{vallis2017atmospheric}.

Aside from its relevance in geophysics, the system presents mathematical interest due to its link with 3D axisymmetric flows \cite[\textcolor{MidnightBlue}{Ch. 5.4.1}]{majda2002vorticity}. Beyond the local well-posedness theory \cite{LWP_Chae} in $H^{s}, s>2$, and refinements to other regularities \cite{BQ_LWP_Besov, BQ_LWP_Holder}, in general little is known about the long time dynamics of solutions to this system. Some settings may exhibit blow-up \cite{Elgindi-Jeong-SingFormBQ, CH22, CH24} and ill-posedness \cite{bianchini2024strongillposednesslinfty2D} scenarios. Here, we study the stability of the stratified steady state
\begin{equation}\label{eq:steady}
    (v_s,\varrho_s)=(0,-\kappa x_2),\qquad p_s(x_1,x_2)=\kappa x_2^2/2.
\end{equation}
That is, for $v=v_s+u=u,\varrho=\varrho_s+\rho=-\kappa x_2+\rho$ we consider the Cauchy problem
\begin{equation}\label{eqn: perturbed BQ}
\left\{
      \begin{aligned}
 &\partial_t u +u\cdot \nabla u=-\nabla p -\kappa\rho\vec{e_2},\\
 &\partial_t\rho+u\cdot \nabla\rho=\kappa u_2, \\
    &\mathrm{div} \hspace{1mm} u=0,\\
    & u(0,x)=u_0(x),\; \rho(0,x)=\rho_0(x).
\end{aligned}
    \right.
\end{equation}
The steady state \eqref{eq:steady} provides a prototypical setting of a \emph{stably stratified} fluid, where the density of the fluid increases in the direction of gravity (i.e.\ $\varrho_s'(x)<0$), see e.g.\ \cite{gallay2019stabilityvorticesidealfluids}.

It is well-known \cite{Elgindi_2015}, that in the setting of \eqref{eqn: perturbed BQ}, stratification induces dispersion whose strength is captured by  the parameter $\kappa$ and the dispersion relation is given by \eqref{def:dispersive-unknowns}, see Sections \ref{sec:about-the-proof}, \ref{sec:Dispersive-unknowns}. The structure of the nonlinearity as well as the nature of the dispersion links it to various fluid models e.g.\ the Euler-Coriolis system in $\R^3$.

%[Concrete and detailed description of the purpose; introducing main result "what" we do] 
Our main result quantifies the effect of the stratification-gravity coupling through the dispersion parameter $\kappa$ on the existence time of solutions to \eqref{eqn: perturbed BQ}. As a direct consequence we show that starting from initial data of only $L^2$-based regularity, the corresponding solution to \eqref{eqn: perturbed BQ} exists beyond the local well-posedness timescale. In what follows, we state the central theorem and discuss implications and existing literature.

\begin{theorem} \label{main-result-BQ}
    Let $n>3$. There exists $C_n>0$ such that if for an $\eps>0$ there holds
    \begin{align}\label{eq:intial-data-BQ}
    \norm{u_0}_{H^n}+\norm{\rho_0}_{H^n}= \eps,
    \end{align}
    then there exists 
    \begin{equation}
        T\geq C_n \kappa^{\frac{1}{3}} \varepsilon^{-\frac{4}{3}},
    \end{equation}
    and a unique solution $(u,\rho)\in C([0,T],H^n(\R^2))\times C([0,T],H^n(\R^2))$ to \eqref{eqn: perturbed BQ} with initial data $(u_0,\rho_0)$. In particular, the corresponding unique solution of \eqref{eqn: BQ} with initial data $(v_0,\rho_0)=(u_0,\rho_0-\kappa x_2)$ exists on the same timescale.
\end{theorem}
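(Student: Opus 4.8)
The plan is to run a bootstrap argument in which the dispersion generated by the stratification is harvested through Strichartz estimates rather than through pointwise decay, so that only the $L^2$-based norm $\norm{\cdot}_{H^n}$ of the data enters, and to reduce first to $\kappa=1$ by a time rescaling. Writing $u=\kappa U$, $\rho=\kappa R$, $p=\kappa^2 P$ and $\tau=\kappa t$ turns \eqref{eqn: perturbed BQ} into the same system with $\kappa=1$, while the data rescales to size $\norm{U_0}_{H^n}+\norm{R_0}_{H^n}=\kappa^{-1}\eps$. Hence it suffices to prove existence up to a time $\widetilde T\gtrsim(\kappa^{-1}\eps)^{-4/3}$ for the normalized system; undoing the scaling gives $T=\kappa^{-1}\widetilde T\gtrsim\kappa^{1/3}\eps^{-4/3}$, which is exactly the claimed lifespan. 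From now on I take $\kappa=1$.

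Next I would pass to the dispersive unknowns. Diagonalizing the linear part of \eqref{eqn: perturbed BQ} one obtains a complex variable $f$ satisfying $\partial_t f+i\Lambda f=\mathcal N(f,f)$, where $\Lambda$ is the degree-zero Fourier multiplier with symbol $\xi_1/\abs{\xi}$ and $\mathcal N$ is a quadratic, transport-type nonlinearity; the fields $u$ and $\rho$ are recovered from $f$ by order-zero operators. The divergence-free condition together with the skew-symmetric coupling $-\rho\vec{e_2}\leftrightarrow u_2$ makes the linear flow $L^2$-unitary, so the $H^n$ energy carries no linear growth. The two quantitative inputs I need for $e^{-it\Lambda}$ are (i) a homogeneous Strichartz estimate $\norm{e^{-it\Lambda}g}_{L^4_t L^\infty_x}\lesssim\norm{g}_{H^{s}}$, and (ii) the inhomogeneous (retarded) estimate $\norm{\int_0^t e^{-i(t-\sigma)\Lambda}F(\sigma)\,d\sigma}_{L^4_t L^\infty_x}\lesssim\norm{F}_{L^{4/3}_t L^1_x}$, both with suitable frequency weights. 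These follow from the $t^{-1/2}$ dispersive decay of the anisotropic semigroup combined with the Hardy–Littlewood–Sobolev inequality, the exponent pair $(4,\infty)$ being dictated by the effectively one-dimensional stationary phase of $\xi_1/\abs{\xi}$.

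With these in hand I would close a continuity argument on
\[
M(T)=\norm{(u,\rho)}_{L^\infty([0,T],H^n)}+\norm{\nabla u}_{L^4([0,T],L^\infty)},
\]
under the assumption $M(T)\le 2C\eps$. A Kato–Ponce commutator estimate (the top-order transport term vanishing by $\mathrm{div}\,u=0$) gives $\norm{(u,\rho)(t)}_{H^n}\le\eps\exp\!\big(C\int_0^t\norm{\nabla u}_{L^\infty}\big)$, and Hölder in time yields $\int_0^T\norm{\nabla u}_{L^\infty}\le T^{3/4}\norm{\nabla u}_{L^4_t L^\infty}\le T^{3/4}M(T)$; for $T\le c\eps^{-4/3}$ this exponent is $\lesssim 1$, so the energy stays $\lesssim\eps$. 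For the Strichartz part I would feed Duhamel's formula for $f$ into (i)–(ii): the linear term $e^{-it\Lambda}f_0$ is bounded time-globally by $\norm{u_0}_{H^n}\sim\eps$, while the retarded term is controlled via (ii) by $\norm{\nabla\mathcal N}_{L^{4/3}_t L^1_x}\lesssim T^{3/4}\norm{f}_{L^\infty_t H^n}^2$, the crucial $L^1_x$ integrability being produced purely by Cauchy–Schwarz on a product of two $L^2$ factors — this is exactly where the localization of prior work is avoided. For $T\le c\eps^{-4/3}$ this is $\lesssim c^{3/4}\eps$, so $\norm{\nabla u}_{L^4_t L^\infty}\lesssim\eps$, improving the bootstrap for $\eps$ and $c$ small. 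Local well-posedness in $H^n$ then extends the solution up to $T\sim\eps^{-4/3}$, uniqueness being standard, and the statement for \eqref{eqn: BQ} follows by adding back the profile $-\kappa x_2$.

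The main obstacle I expect is input (ii): establishing the inhomogeneous Strichartz estimate at the $(4,\infty)$–$(4/3,1)$ endpoints for the degenerate, anisotropic, degree-zero symbol $\xi_1/\abs{\xi}$, whose Hessian is everywhere rank-deficient (it depends only on the angular variable), and tracking the frequency weights so that the $\nabla u$ losses remain summable — this is what forces $n>3$. A secondary point requiring care is verifying that the quadratic nonlinearity $\mathcal N$, after one derivative, genuinely lands in $L^{4/3}_t L^1_x$ with bounds by the $H^n$ energy alone, so that no weighted or $L^1$-based norm of the data ever enters the estimates.
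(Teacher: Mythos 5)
Your overall architecture---dispersive scalar unknowns with symbol $\xi_1/\absxi$, frequency-localized homogeneous and retarded Strichartz estimates at the $\tfrac12$-admissible pair $(4,\infty)$, a bootstrap coupling the $H^n$ energy with a time-integrated $L^\infty_x$-type dispersive norm, and the reduction to $\kappa=1$ by time-scaling---is the same as the paper's, and your time numerology ($T^{3/4}\eps\lesssim 1$, hence $T\gtrsim\kappa^{1/3}\eps^{-4/3}$) comes out identically. The rescaling step, the energy/blow-up criterion, and the global-in-time bound on the linear term are all fine.

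However, there is a genuine gap in the nonlinear Strichartz step, exactly at the point you flag as ``requiring care'': dualizing the retarded estimate at $(q_2',r_2')=(4/3,1)$ and producing the $L^1_x$ integrability ``purely by Cauchy--Schwarz on a product of two $L^2$ factors'' costs one derivative more than the theorem allows. Concretely, the frequency-localized retarded estimate (Theorem \ref{Strichartz-Sobolev}\eqref{Strichartz-3}) with $(q_1,r_1)=(q_2,r_2)=(4,\infty)$ carries the weight $2^{4k/q_1+4k/q_2}=2^{2k}$, and the extra $2^k$ from $\nabla u$ makes the nonlinear contribution to your bootstrap norm of the form $\sum_k 2^{3k}\norm{P_k\mathcal{N}}_{L^{4/3}_tL^1_x}$ (plus a $\delta$ for summability). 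In the low-high paraproduct piece $P_{\leq k}u\cdot P_k\nabla f$, the $L^1_x$ H\"older pairing forces \emph{both} factors into $L^2_x$: there is no way to bound $\norm{P_k\nabla f}_{L^{p'}_x}$ for any $p'<2$ by $L^2$-based norms on $\R^2$ (this is precisely the localization obstruction you are trying to avoid), so the high-frequency factor must absorb the entire weight $2^{(3+\delta)k}$ on top of its own derivative, i.e.\ you need $\norm{f}_{H^{4+\delta}}$. Your scheme therefore proves the statement only for $n>4$, not $n>3$; the frequency-weight bookkeeping does not merely ``force $n>3$'' as you anticipate, it breaks the stated regularity threshold. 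The paper avoids this by dualizing at $(q_2,r_2)=(\infty,2)$: the nonlinearity is then measured in $L^1_tL^2_x$ with weight only $2^{4k/q_1}=2^k$ (note $4/q+2/r=1$ for admissible pairs, so this count is independent of the output pair), and in the dangerous low-high interaction the \emph{low}-frequency factor is placed in $L^\infty_x$ via \eqref{infty-bound-by-Besov} and controlled by the Besov bootstrap norm $\norm{\cdot}_{L_t^1 B^1_{\infty,1}}$, which simultaneously resolves the Riesz-transform unboundedness; only $H^{3+\delta}$ is then needed on the high factor (Lemma \ref{product-lemma-final}, Proposition \ref{infty-norm-bound-final}). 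Both choices yield the same time factor $\tau^{3/4}$, so nothing is lost in the timescale: the quasilinear pairing (small bootstrap norm) $\times$ (energy), rather than energy $\times$ energy with H\"older in time, is what buys back the missing derivative.
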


\smallskip
\begin{enumerate}[ align=left, leftmargin=0pt, 
          labelindent=0pt,listparindent=0pt, labelwidth=1em, itemindent=2em]
    \item(small data)
    One of the main implications of Theorem \ref{main-result-BQ} is that solutions to \eqref{eqn: perturbed BQ} with uniform stratification $\kappa=1$ starting from $\eps$-sized, \textit{$H^n$-regular} initial data with $\eps\ll 1$ exist on a timescale $\mathcal{O}(\eps^{-4/3})$.
    Previous attempts to stay in this regularity class have recovered, up to $\log$-improvements, the local well-posedness timescale $\mathcal{O}(\eps^{-1})$ \cite{wan2016, TakadaBQ2D}.  On the contrary, under \textit{additional localization} assumptions on the initial data, several authors have improved the local-well posedness timescale: refining the regularity obtained in the first result of \cite{Elgindi_2015}, \cite{wan2016} proved the $\mathcal{O}(\eps^{-4/3})$ lifespan for initial data in $H^{4^+}\cap \dot{B}^3_{1,1}$. Moreover, \cite{JW24} obtained the longest known existence timescale $\mathcal{O}(\eps^{-2})$ albeit under much higher Sobolev regularity and appropriate anisotropic localization assumptions, see \cite{Pusateri_2018, EC2022} for related models. \smallskip
    \item(strong stratification) Our result  also gives a qualitative answer to the  ``long-time solvability" question, i.e.\ given a time $T_*>0$ and fixed initial data, what is the minimal strength of stratification $\blue{\kappa(T_*)}$ such that a solution to \eqref{eqn: perturbed BQ} with $\kappa \geq \blue{\kappa(T_*)}$ exists up to time $\blue{T_*}$? Theorem \ref{main-result-BQ} shows that for  initial data of size $\eps\sim 1$ in $H^{3^+}$, a strength of dispersion $\kappa\gtrsim T_*^3$ guarantees existence up to the given time $T_*$. This improves over existing results in this regularity class, \cite{wan2016} obtaining the much rougher lower bound $\kappa\gtrsim T_*^4\exp(CT_*^{3/4})$.
    On the other hand, \cite{TakadaBQ2D} showed that the much stronger requirement $\kappa\gtrsim T_*^{7}\exp(CT_*)$ allows the solution to exist until time $T_*$ even for initial data in $H^{2^+}$, covering the entire local well-posedness class. \smallskip
       \item 
       In fact, the two viewpoints (1) ``small data" and (2) ``strong stratification" are equivalent as a consequence of the following scaling symmetry satisfied by \eqref{eqn: BQ}, \eqref{eqn: perturbed BQ}:

           \hspace{1em}\textit{Time-scaling.} If $(u,\rho)$ solve \eqref{eqn: BQ} ( resp.\ \eqref{eqn: perturbed BQ}) with strength of gravity $\kappa$ on the time interval $[0,T_\kappa]$ and initial data satisfying $\|(u_0,\rho_0)\|\leq \eps$, then the rescaled functions
    $(u_\kappa,\rho_\kappa)(t,x)=\kappa^{-1}(u,\rho)(\kappa^{-1}t,x)$ satisfy \eqref{eqn: BQ} (resp.\ \eqref{eqn: perturbed BQ}) with $\kappa=1$ on the time interval $[0,\kappa T_\kappa]$ with initial data $\|(u_{\kappa,0},\rho_{\kappa,0})\|\leq \kappa^{-1}\eps$. \smallskip
    \item (regularity) Our methods allow to take initial data $(u_0,\rho_0)\in \dot{H}^m\cap\dot{H}^n\supset H^n $, where $0 \leq m<1$ and $n>3$. This follows by refining the energy estimates (Proposition \ref{prop: energy estimates BQ}), Lemma \ref{product-lemma-final} and Proposition \ref{bootstrap-prop-final} to homogeneous Sobolev spaces in a straightforward way. In particular, our method allows for solutions with infinite $L^2$-norm, however, in Theorem \ref{main-result-BQ} we focus on the physically relevant solutions of finite kinetic energy.
\end{enumerate}

\bigskip

\par A closely related model is the dispersive SQG equation with the strength of dispersion $\kappa$:
\begin{equation}\label{SQG}
\begin{cases}
   \partial_t \theta +u \cdot \nabla \theta = \kappa \mathcal{R}_1\theta, \\
   u=\nabla^\perp(-\Delta)^{-1/2}\theta, \\
   \theta(0,x)=\theta_0(x),
\end{cases}\end{equation}
where $\theta:\R_+ \times \R^2\to \R$ is the temperature of the fluid and $\widehat{\mathcal{R}_1f}(\xi)\vcentcolon=-i{\xi_1}{\abs{\xi}^{-1}}\widehat{f}$ is the Riesz transform in the first coordinate. Its dispersive nature links it to the Boussinesq model above. The inviscid SQG equation ($\kappa=0$) has been proposed as a model for studying atmospheric turbulence, see e.g.\ \cite{P_Constantin_1994}.
The analogous result (with the same implications as above) for the dispersive SQG equation is summarized as follows:

\begin{theorem} \label{main-result-SQG}
    Let $n>3$. There exists $C_n>0$ such that if for an $\eps>0$ there holds
    \begin{align}\label{eq:intial-data-SQG}
    \norm{\theta_0}_{H^n}= \eps,
    \end{align}
   then there exists
    \begin{equation}
        T\geq C_n \kappa^{\frac{1}{3}} \varepsilon^{-\frac{4}{3}},
    \end{equation}
   and a unique solution $\theta\in C([0,T],H^n(\R^2))$ to \eqref{eqn: perturbed BQ} with initial data $\theta_0$.
   
\end{theorem}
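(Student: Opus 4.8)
The plan is to mirror the argument for Theorem~\ref{main-result-BQ}, exploiting that \eqref{SQG} carries exactly the same linear structure. Writing the linear propagator as $S(t)=e^{t\kappa\mathcal{R}_1}$, its Fourier multiplier $e^{-it\kappa\xi_1/\abs{\xi}}$ has the degree-zero dispersion relation $\omega(\xi)=\kappa\xi_1/\abs{\xi}$, which coincides up to sign with the internal-gravity-wave relation obtained by linearizing \eqref{eqn: perturbed BQ}. Consequently the (homogeneous and inhomogeneous) Strichartz estimates established earlier for the Boussinesq propagator apply verbatim to $S(t)$. Since $\mathcal{R}_1$ is skew-adjoint and commutes with $\langle\nabla\rangle^n$, and since $u=\nabla^\perp(-\Delta)^{-1/2}\theta$ is divergence-free and of the same regularity as $\theta$, the nonlinearity $u\cdot\nabla\theta$ has the same quasilinear transport structure as in the Boussinesq case, so the energy and bootstrap scheme should close in the same way.

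Concretely, I would first record the local theory: for $n>3$ the equation \eqref{SQG} is locally well-posed in $H^n(\R^2)$ with continuation criterion $\int_0^T\|\nabla u\|_{L^\infty}\,dt<\infty$, via a standard commutator (Kato--Ponce) energy estimate in which the skew-adjointness of $\kappa\mathcal{R}_1$ annihilates the linear contribution to $\tfrac{d}{dt}\|\theta\|_{H^n}^2$. The energy estimate then yields
\begin{equation}
\|\theta(t)\|_{H^n}\le\eps\,\exp\Big(C\!\int_0^t\|\nabla u(s)\|_{L^\infty}\,ds\Big),
\end{equation}
so the whole problem reduces to improving the bound for $\int_0^T\|\nabla u\|_{L^\infty}\,dt$ beyond the trivial estimate $\|\nabla u\|_{L^\infty}\lesssim\|\theta\|_{H^n}\lesssim\eps$, which only delivers $T\lesssim\eps^{-1}$.

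The gain comes from dispersive control of a lower-order piece. I would run a bootstrap on a norm of the form $X_T=\|\theta\|_{L^\infty_t([0,T];H^n)}+\|\langle\nabla\rangle^\sigma\theta\|_{L^4_t([0,T];L^\infty_x)}$, with $\sigma$ chosen (this is where $n>3$ enters, to keep $\sigma<n$ while retaining enough derivatives above the $L^\infty_x$ Sobolev threshold) so that $\|\nabla u\|_{L^\infty}\lesssim\|\langle\nabla\rangle^\sigma\theta\|_{L^\infty_x}$. By Hölder in time, $\int_0^T\|\nabla u\|_{L^\infty}\,dt\lesssim T^{3/4}\|\langle\nabla\rangle^\sigma\theta\|_{L^4_tL^\infty_x}$, so the $L^4_t$ endpoint is exactly what converts the Strichartz control $\|\langle\nabla\rangle^\sigma\theta\|_{L^4_tL^\infty_x}\lesssim\kappa^{-1/4}\eps$ into $T^{3/4}\kappa^{-1/4}\eps\lesssim1$, i.e.\ the claimed $T\gtrsim\kappa^{1/3}\eps^{-4/3}$; here the factor $\kappa^{-1/4}$ is forced by the time-rescaling $t\mapsto t/\kappa$ in the phase. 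The homogeneous Strichartz estimate supplies the linear contribution $\|\langle\nabla\rangle^\sigma S(t)\theta_0\|_{L^4_tL^\infty_x}\lesssim\kappa^{-1/4}\|\theta_0\|_{H^n}=\kappa^{-1/4}\eps$, while the \emph{inhomogeneous} Strichartz estimate bounds the Duhamel term $\int_0^tS(t-s)(u\cdot\nabla\theta)\,ds$ and, under the bootstrap hypothesis, produces a strictly higher-order-in-$\eps$ contribution that is reabsorbed.

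The main obstacle I anticipate is the derivative loss in $u\cdot\nabla\theta$: Strichartz estimates do not recover derivatives, so one cannot close a purely dispersive estimate at top order. The resolution, and the delicate balancing point, is to separate roles: the top-order $H^n$ norm is propagated only by the energy estimate, where the transport structure and skew-adjointness prevent any loss, whereas dispersion is used solely for the lower-order quantity $\langle\nabla\rangle^\sigma\theta$ with $\sigma$ strictly below $n$, leaving room to absorb the derivative in the inhomogeneous estimate of the nonlinearity. The other technical subtlety, already dealt with in the Boussinesq part, is the $L^\infty_x$ endpoint for a degree-zero, anisotropic dispersion relation whose phase depends only on the angle of $\xi$; its stationary-phase analysis is non-standard and frequency-dependent, and the hypothesis $n>3$ is what guarantees enough summability over Littlewood--Paley pieces to reach $L^\infty_x$. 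Finally, I would note that the $\kappa$-dependence can equivalently be produced from the time-scaling symmetry, reducing everything to $\kappa=1$.
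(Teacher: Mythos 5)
Your proposal is correct in substance and follows essentially the same route as the paper: energy estimate and blow-up criterion at top order $H^n$, a bootstrap in which dispersion is used only at a strictly lower regularity, homogeneous and inhomogeneous Strichartz estimates for $\Lambda_\kappa(\xi)=\kappa\xi_1/\abs{\xi}$ (Theorem \ref{Strichartz-Sobolev}), and the arithmetic $T^{3/4}\kappa^{-1/4}\eps\lesssim 1$, i.e.\ $T\gtrsim\kappa^{1/3}\eps^{-4/3}$. The only genuine difference is bookkeeping: you carry the bootstrap on the Strichartz-type norm $\|\langle\nabla\rangle^\sigma\theta\|_{L^4_tL^\infty_x}$ and apply H\"older in time only when estimating $\int_0^T\|\nabla(u,\theta)\|_{L^\infty}\,dt$, whereas the paper carries it on $\|\theta\|_{L^1_tB^1_{\infty,1}}$ (Proposition \ref{bootstrap-prop-final}) and applies H\"older at the start of Proposition \ref{infty-norm-bound-final}; the two are interchangeable, yours being marginally more direct, the paper's Besov-in-space norm making the Riesz-transform issue and the product estimate \eqref{product-lemma-final} transparent and keeping the whole family $q\geq 4$. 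Three points where your write-up understates the work needed. First, $\|\nabla u\|_{L^\infty_x}\lesssim\|\langle\nabla\rangle^\sigma\theta\|_{L^\infty_x}$ is not a Sobolev triviality: $\nabla u=\nabla\nabla^\perp(-\Delta)^{-1/2}\theta$ is an order-one Riesz-type operator, unbounded on $L^\infty$ at fixed frequency; the bound does hold for $\sigma>1$, but only via Littlewood--Paley summation (the symbol vanishes linearly at $\xi=0$, and $\sigma>1$ sums the high frequencies), which is exactly the mechanism the paper packages into $B^1_{\infty,1}$ through \eqref{infty-bound-by-Besov} --- so this must be proved, not posited; relatedly, the obstruction is summability, not any non-standard stationary phase (the $(\kappa t)^{-1/2}$ decay is quoted from prior work). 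Second, the energy estimate for \eqref{SQG} (Proposition \ref{prop: energy estimates SQG}) requires $\|\nabla\theta\|_{L^\infty}$ as well as $\|\nabla u\|_{L^\infty}$, though your bootstrap quantity controls both. Third, your resolution of the derivative loss in the Duhamel term is the right idea, but it hinges on the specific choice of dual pair in the inhomogeneous estimate --- Theorem \ref{Strichartz-Sobolev}\eqref{Strichartz-3} with $(q_2,r_2)=(\infty,2)$, so the nonlinearity is measured in $L^1_tH^s_x$ with $s\leq n-1$ and then split by \eqref{product-lemma-final} into (bootstrap norm)$\times$(energy norm); this exponent choice should be made explicit, since with other pairs the argument does not close.
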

\subsection{About the proof and plan of the article}\label{sec:about-the-proof}Investigating the role of stratification on the stability time goes back at least to works on rotating 3D fluids \cite{CDGG02,Math_Geophysics_Gallagher-Chemin}, \cite{Dut05}, \cite{KohLeeTakada-Str-Euler, KLT14-NS}, where rotation induces dispersion. We prove the main theorems via dispersive techniques akin to ones considered in the above works for rotating fluids and in \cite{Elgindi_2015, wan2016, TakBQ3D, TakadaBQ2D, charve-strichartz-2023, A-CFK-BQStrichartz, charve-strichartz-2025} in the context of stratified fluids. In the discussion below we highlight some key novelties, in particular \blue{the bootstrap framework, allowing for the use of inhomogeneous Strichartz estimates (similar to techniques used in the context of compressible Euler flows with rotation \cite{compr-euler-strichartz}).}

\begin{comment}
    the use of \textit{inhomogeneous Strichartz estimates} not considered in previous works on \eqref{eqn: perturbed BQ}, \eqref{SQG} (but also exploited in ongoing work in the context of compressible Euler flows with rotation \cite{compr-euler-strichartz}).
\end{comment}

 The standard continuation criterion of the solution to \eqref{eqn: perturbed BQ} in $H^n$ involves controlling $\norm{\nabla(u,\rho)}_{L_t^1L_x^\infty}$, see e.g.\ Section \ref{sec:energy-estimates}. To that end, we want to exploit the dispersive nature of the problem. In Section \ref{sec:Dispersive-unknowns} we recall that the system \eqref{eqn: perturbed BQ} is equivalent to a nonlinear dispersive PDE \eqref{eqn: BQ on Z+, Z-} for scalar unknowns $Z_\pm$  \eqref{def:dispersive-unknowns} with dispersion relation $ \Lambda_\kappa(\xi)=\kappa{\xi_1}{\absxi^{-1}}$. The dispersive decay at the sharp  $(\kappa t)^{-1/2}$ rate has been shown in \cite{Elgindi_2015}, see Prop. \ref{prop:dispersive-decay}. 
 Hereafter we view $(u,\rho)$ in terms of $Z_\pm$ and consider the Duhamel formulation (see \eqref{Z+--in-prop}):
\begin{align}\label{intro-Duhamel}
    Z_\pm(t)=e^{\mp it\Lambda_\kappa}Z_{\pm,0}+\int_{0}^te^{\mp i(t-s)\Lambda_\kappa}\mathcal{NL}_\pm(Z_+,Z_-)ds,
\end{align}
where $\mathcal{NL}_\pm(Z_+,Z_-)$ encodes the nonlinearities $u\cdot \nabla u$, $u\cdot \nabla \rho$ in terms of $Z_\pm$. In this context, the central mechanism 
is a bootstrapping scheme detailed in Section \ref{sec:proof-main-thms} involving the energy norms $H^n$ and a bootstrap norm on the unknowns $Z_\pm$ discussed below. 

The key novelty of this paper is the choice of bootstrap framework (Proposition \ref{bootstrap-prop-final}), powered by inhomogeneous Strichartz estimates (Theorem \ref{Strichartz-Sobolev}) and a product lemma adjusted to this framework (Lemma \ref{product-lemma-final}). The main technical element of the proof is detailed in Proposition \ref{infty-norm-bound-final}, where we prove bounds on the bootstrap norm $\|(Z_+,Z_-)\|_{L_t^1\blue{ (\dot{B}_{\infty,1}^0\cap\dot{B}_{\infty,1}^1) }}$. 
Aside from  estimating the blow-up norm above, it allows to circumvent the unboundedness in $L^\infty$ of the Riesz transform appearing in $u$, $Z_\pm$, $\mathcal{NL}_\pm$. Indeed, employing the Besov norms resolves the issue of the Riesz transform as a result of handling objects that are localized in frequency, see Section \ref{sec:localizations}\blue{, Lemma \ref{product-lemma-final}}. However, this comes at the cost of an arbitrarily small $\delta$-derivative loss in $L^2$ due to summation, see e.g.\ \eqref{eqn:summation-L^2}, which resulted in missing $n=3$ in Theorem \ref{main-result-BQ}. Finally, this norm provides a good framework to employ Strichartz estimates discussed above. We note that while the use of Strichartz estimates on the linear term in \eqref{intro-Duhamel} does not provide the optimal linear timescale, see e.g.\ \eqref{eqn:linear-term-proof}, they allow us to keep the $L^2$-based regularity and match the timescale obtained from the nonlinear term.

\textbf{Plan of the Article. }In Section \ref{sec:setup}, we introduce the dispersive unknowns $Z_\pm$ for the Boussinesq system as well as the energy estimates available for the two problems. The dispersive decay and Strichartz estimates are presented in Section \ref{sec:Strichartz-estimates}. The detailed proof of the main theorems via bootstrap is conducted in Section \ref{sec:proof-main-thms}, where bounds on the bootstrap norm are shown in Section \ref{sec:bounds-on-boostrap-norm}.

\end{section}
\begin{section}{Setup}\label{sec:setup}
\subsection{Dispersive unknowns}\label{sec:Dispersive-unknowns}
It is well-known that the system \eqref{eqn: perturbed BQ} exhibits dispersion. For the purposes of this paper, we write the system in scalar unknowns such that the dispersive structure becomes explicit. We follow the approach from \cite[\textcolor{MidnightBlue}{Section 2.1}]{JW24} and explain the choice of unknowns, however we refer to \cite{JW24} for the complete proofs.
To this end, it is convenient to look at the system in vorticity form in order to have scalar unknowns $\omega, \rho:\R^+\times \R^2\to \R$ as
\begin{equation}\label{eqn:bq-vorticity}
\left\{
      \begin{aligned}
 &\partial_t\omega +u\cdot \nabla \omega=-\kappa\partial_{x_1}\rho,\\
 &\partial_t\rho+u\cdot \nabla\rho=\kappa\partial_{x_1}\Delta^{-1}\omega, \\
    &u=\nabla^\perp \Delta^{-1}\omega,
\end{aligned}
    \right.
\end{equation}
\blue{where $\omega = \nabla^\perp\cdot u$ is the vorticity.} The linearization in the frequency space reads:
\begin{equation}
\left\{
      \begin{aligned}
 &\partial_t\widehat{\omega}=-i\kappa\xi_1\widehat{\rho},\\
 &\partial_t\widehat{\rho}=-i\kappa \xi_1\abs{\nabla}^{-2}\omega.
\end{aligned}
    \right.
\end{equation}
We observe that $\F( \abs{\nabla}^{-1}\omega\pm \rho)$ diagonalize the linear system above with eigenvalues $\mp i\kappa \frac{\xi_1}{\absxi}$. Define the dispersive unknowns and the dispersion relation
\begin{equation}
   \begin{split}\label{def:dispersive-unknowns}
    Z_\pm&\vcentcolon=  \abs{\nabla}^{-1}\omega\pm \rho= \absnabla^{-1}\nabla^\perp\cdot u \pm \rho, \\
    \Lambda_\kappa(\xi)&\vcentcolon=\kappa \frac{\xi_1}{\absxi}. 
\end{split} 
\end{equation}

By abuse of notation, we denote by $\Lambda_\kappa$ also the pseudo-differential operator with symbol $\kappa \xi_1|\xi|^{-1}$, that is $\Lambda_\kappa=-\kappa\mathcal{R}_1$. Then linearly there holds $\partial_t Z_\pm= \mp i\Lambda_\kappa(\xi)Z_\pm.$
Note also that 
\begin{align}\label{eqn:unknowns-in-terms-of-Z}
    u=-\frac{1}{2}\nabla^{\perp}\abs{\nabla}^{-1}(Z_++Z_-), \qquad \rho=\frac{1}{2}(Z_+-Z_-), \qquad \omega=\frac{1}{2}\abs{\nabla}(Z_++Z_-),
\end{align}
and that the following energy balance holds by a direct computation
\begin{align}\label{eqn: energy conservation Z+ Z-}    
\norm{u}_{\dot{H}^k}^2+\norm{\rho}_{\dot{H}^k}^2=\frac{1}{2}\norm{Z_+}_{\dot{H}^k}^2+\frac{1}{2}\norm{Z_-}_{\dot{H}^k}^2,\quad k\in \blue{\N \cup \{0\}}.
    \end{align}
    The nonlinear system \eqref{eqn: perturbed BQ} can be written in terms of the dispersive unknowns $Z_\pm$ and it is this formulation that is best adapted to prove the main result.
    \begin{proposition}{\cite[\textcolor{MidnightBlue}{Proposition 2.1}]{JW24}}\label{prop: profiles BQ}
Let $(u,\rho)\in C([0,T], H^n(\R^2)\times H^n(\R^2))$ solve \eqref{eqn: perturbed BQ}.
Define the dispersive unknowns $Z_\pm$ and the dispersive operator $\Lambda_\kappa$ by \eqref{def:dispersive-unknowns}. 
Then $Z_\pm$ satisfy 
\begin{equation}\label{eqn: BQ on Z+, Z-}
    \blue{\begin{split}
         &\partial_tZ_\pm -\frac{1}{4}\absnabla^{-1}\mathrm{div}\Big(\nabla^\perp \absnabla^{-1}(Z_++Z_-)\cdot \absnabla(Z_++Z_-)\Big)\\
 &\hspace{3.2cm}\mp\frac{1}{4}\nabla^\perp\absnabla^{-1}(Z_++Z_-)\cdot\nabla(Z_+-Z_-)=\mp i\Lambda_\kappa Z_\pm.
    \end{split}}
\end{equation}
In particular, the Boussinesq system \eqref{eqn: perturbed BQ} on $(u,\rho)$ is equivalent to the system \eqref{eqn: BQ on Z+, Z-} on $Z_\pm$.
\end{proposition}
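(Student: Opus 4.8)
The plan is to verify \eqref{eqn: BQ on Z+, Z-} by direct substitution, working in the vorticity formulation \eqref{eqn:bq-vorticity} where the unknowns $\omega,\rho$ are already scalar. First I would apply $\absnabla^{-1}$ to the $\omega$-equation and combine it with the $\rho$-equation to form $\partial_t Z_\pm=\absnabla^{-1}\partial_t\omega\pm\partial_t\rho$, and then insert the right-hand sides of \eqref{eqn:bq-vorticity}. This splits the computation cleanly into a \emph{linear} (dispersive) part coming from the $\kappa\partial_{x_1}$ terms and a \emph{nonlinear} part coming from the transport terms $u\cdot\nabla\omega$ and $u\cdot\nabla\rho$.

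For the linear part I would check the diagonalization directly on the Fourier side, which is the content of the eigenvalue computation displayed just above the proposition. Using $\Delta^{-1}=-\absnabla^{-2}$ and the multiplier identity $-i\Lambda_\kappa=-\kappa\,\partial_{x_1}\absnabla^{-1}$ (symbol $-i\kappa\xi_1\absxi^{-1}$), the combination $\absnabla^{-1}(-\kappa\partial_{x_1}\rho)\pm\kappa\partial_{x_1}\Delta^{-1}\omega$ collapses to $\mp i\Lambda_\kappa(\absnabla^{-1}\omega\pm\rho)=\mp i\Lambda_\kappa Z_\pm$, which is precisely the right-hand side of \eqref{eqn: BQ on Z+, Z-}.

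For the nonlinear part, the key structural point is the incompressibility condition: since $\mathrm{div}\,u=0$ we may write $u\cdot\nabla\omega=\mathrm{div}(u\omega)$, so the vorticity transport term becomes $\absnabla^{-1}\mathrm{div}(u\omega)$, with $\absnabla^{-1}\mathrm{div}$ an order-zero operator (this is what makes the term admissible). I would then substitute the inversion formulas \eqref{eqn:unknowns-in-terms-of-Z}, namely $u=-\tfrac12\nabla^\perp\absnabla^{-1}(Z_++Z_-)$, $\omega=\tfrac12\absnabla(Z_++Z_-)$ and $\rho=\tfrac12(Z_+-Z_-)$, into $\absnabla^{-1}\mathrm{div}(u\omega)$ and into $\pm\,u\cdot\nabla\rho$. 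Collecting the resulting quadratic expressions in $Z_\pm$, and keeping careful track of the $\tfrac14$ prefactors and signs produced by each factor of $\tfrac12$, yields the two nonlinear terms in \eqref{eqn: BQ on Z+, Z-}.

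Finally, for the equivalence I would observe that the map $(u,\rho)\mapsto(Z_+,Z_-)$ defined by \eqref{def:dispersive-unknowns} is a bijection onto its image with inverse given by \eqref{eqn:unknowns-in-terms-of-Z}: the velocity $u$ is recovered from $\omega=\tfrac12\absnabla(Z_++Z_-)$ via Biot–Savart (using $\mathrm{div}\,u=0$ once more), and $\rho$ from $\tfrac12(Z_+-Z_-)$. Since this change of variables and its inverse are assembled from bounded, order-zero Fourier multipliers together with the Biot–Savart map $\nabla^\perp\absnabla^{-1}$, it preserves the regularity class, so that a solution of \eqref{eqn: perturbed BQ} in $C([0,T],H^n)$ corresponds to a solution of \eqref{eqn: BQ on Z+, Z-} and conversely. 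I expect the only real friction to be careful bookkeeping: tracking the nonlocal operators $\absnabla^{\pm1},\nabla^\perp,\Delta^{-1}$ and their signs through the substitution, and making the equivalence statement clean at the level of function spaces given the $\absnabla^{-1}$ appearing in both $Z_\pm$ and $u$ (i.e.\ checking that no genuine low-frequency/homogeneous-norm obstruction is introduced). The computation itself is otherwise routine, which is why I would present it concisely and refer to \cite{JW24} for the full details.
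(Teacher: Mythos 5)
Your proposal is correct and is essentially the route the paper itself takes: Section \ref{sec:Dispersive-unknowns} records exactly this outline (vorticity formulation \eqref{eqn:bq-vorticity}, Fourier-side diagonalization leading to \eqref{def:dispersive-unknowns}, inversion formulas \eqref{eqn:unknowns-in-terms-of-Z}) and defers the remaining direct substitution to \cite{JW24}, which is precisely the computation you describe, including the use of $\mathrm{div}\,u=0$ to write $u\cdot\nabla\omega=\mathrm{div}(u\omega)$ and the order-zero character of the change of variables for the equivalence.

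One concrete warning about the bookkeeping you yourself flag as the only friction point: carried out with the paper's own conventions, the substitution does \emph{not} reproduce the displayed signs in \eqref{eqn: BQ on Z+, Z-}. The signs in \eqref{eqn:bq-vorticity} force $\nabla^\perp=(-\partial_2,\partial_1)$, and then \eqref{eqn:unknowns-in-terms-of-Z} gives $u=-\tfrac12\nabla^\perp\absnabla^{-1}(Z_++Z_-)$, $\omega=\tfrac12\absnabla(Z_++Z_-)$, $\rho=\tfrac12(Z_+-Z_-)$; inserting these into $\absnabla^{-1}\mathrm{div}(u\omega)\pm u\cdot\nabla\rho$ (brought to the left-hand side) yields both quadratic terms with prefactor $-\tfrac14$, i.e.\ the opposite sign to the one displayed in \eqref{eqn: BQ on Z+, Z-}, while the linear term does come out as $\mp i\Lambda_\kappa Z_\pm$ exactly as stated. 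This is an internal sign/convention discrepancy of the statement (compare also the opposite signs in front of the Duhamel integrals in \eqref{intro-Duhamel} and \eqref{Z+--in-prop}), not a flaw in your method, and it is harmless for everything downstream since only norms of $\mathcal{NL}_\pm$ are ever used. Still, a careful write-up along your plan should record the signs the substitution actually produces (or trace the convention in \cite{JW24} responsible for the flip), rather than assert that the displayed signs emerge.
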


\subsection{Energy Estimates}\label{sec:energy-estimates}
The Boussinesq system and the SQG equation satisfy a priori energy estimates which follow via a standard proof (see \cite[\textcolor{MidnightBlue}{Propositions 4.2, 4.4}]{JW24},  \cite[\textcolor{MidnightBlue}{Lemma 3.1}]{Elgindi_2015}) and we record them in the following propositions.
\begin{proposition}\label{prop: energy estimates BQ}
    For any $n \geq 0$, let $(u,\rho)\in  C([0,T],H^n(\R^2))\times C([0,T],H^n(\R^2))$ solve \eqref{eqn: perturbed BQ} with initial data $(u_0,\rho_0)\in (H^n(\R^2))^2$ for $ T\geq 0$. Then for $0\leq t\leq T$ there holds
    \begin{align*}
        &\norm{u(t)}_{H^{n}}^2+\norm{\rho(t)}_{H^{n}}^2- \norm{u_0}_{H^{n}}^2-\norm{\rho_0}_{H^{n}}^2\lesssim \int_0^t\inftynorm{\nabla (u,\rho)(s)}(\norm{u(s)}_{H^n}^2+\norm{\rho(s)}_{H^{n}}^2)ds.
    \end{align*}
\end{proposition}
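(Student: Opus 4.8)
The plan is to run a direct $H^n$ energy estimate on \eqref{eqn: perturbed BQ}, with the entire content residing in two algebraic cancellations and a single commutator bound. Writing $\langle\nabla\rangle^n=(1-\Delta)^{n/2}$, I would apply $\langle\nabla\rangle^n$ to the momentum and density equations, pair the resulting identities in $L^2$ against $\langle\nabla\rangle^n u$ and $\langle\nabla\rangle^n\rho$ respectively, and add. The pressure term drops out after integration by parts, since $\ltwoscalarproduct{\langle\nabla\rangle^n\nabla p}{\langle\nabla\rangle^n u}=-\ltwoscalarproduct{\langle\nabla\rangle^n p}{\langle\nabla\rangle^n\,\mathrm{div}\,u}=0$ by incompressibility. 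Crucially, the two linear terms produced by the stratification--gravity coupling, namely $-\kappa\ltwoscalarproduct{\langle\nabla\rangle^n\rho}{\langle\nabla\rangle^n u_2}$ from the momentum equation and $+\kappa\ltwoscalarproduct{\langle\nabla\rangle^n u_2}{\langle\nabla\rangle^n\rho}$ from the density equation, are exactly opposite and cancel upon summation. This antisymmetry encodes the neutral linear stability of the rest state \eqref{eq:steady} and is precisely what makes the resulting bound independent of $\kappa$.

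After these cancellations only the two advection contributions remain, and I would treat them identically. For $f\in\{u,\rho\}$ I would split
$$\ltwoscalarproduct{\langle\nabla\rangle^n(u\cdot\nabla f)}{\langle\nabla\rangle^n f}=\ltwoscalarproduct{[\langle\nabla\rangle^n,u\cdot\nabla]f}{\langle\nabla\rangle^n f}+\ltwoscalarproduct{u\cdot\nabla\langle\nabla\rangle^n f}{\langle\nabla\rangle^n f},$$
and observe that the last term equals $\tfrac12\int u\cdot\nabla\abs{\langle\nabla\rangle^n f}^2=-\tfrac12\int(\mathrm{div}\,u)\abs{\langle\nabla\rangle^n f}^2=0$, again by $\mathrm{div}\,u=0$. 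This removes the top-order derivative and leaves only the commutator, which is the sole analytic input: by the Kato--Ponce commutator estimate,
$$\norm{[\langle\nabla\rangle^n,u\cdot\nabla]f}_{L^2}\lesssim\inftynorm{\nabla u}\norm{f}_{H^n}+\inftynorm{\nabla f}\norm{u}_{H^n}.$$

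Applying this with $f=u$ and $f=\rho$, followed by Cauchy--Schwarz, bounds every surviving term by a constant times $\inftynorm{\nabla(u,\rho)}(\norm{u}_{H^n}^2+\norm{\rho}_{H^n}^2)$, so that
$$\tfrac12\ddt\left(\norm{u}_{H^n}^2+\norm{\rho}_{H^n}^2\right)\lesssim\inftynorm{\nabla(u,\rho)}\left(\norm{u}_{H^n}^2+\norm{\rho}_{H^n}^2\right),$$
and integrating in time from $0$ to $t$ gives the claim. I do not expect a genuine obstacle here, as this is the standard symmetrized energy argument; the only points meriting a remark are that for the regularity of interest ($n>2$, in particular $n>3$ in Theorem \ref{main-result-BQ}) the embedding $H^n\hookrightarrow W^{1,\infty}$ renders $\inftynorm{\nabla(u,\rho)}$ finite and legitimizes the commutator estimate, and that the formal manipulations above should properly be performed on regularized (e.g.\ mollified) solutions and then passed to the limit.
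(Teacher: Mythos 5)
Your proof is correct and is exactly the ``standard proof'' the paper invokes (it does not spell it out, citing instead \cite{JW24} and \cite{Elgindi_2015}): symmetrized $H^n$ energy estimate with the pressure removed by incompressibility, the $\kappa$-terms cancelling by antisymmetry, the transport term killed by $\mathrm{div}\,u=0$, and the remainder controlled by the Kato--Ponce commutator bound. Your closing remarks on mollification and on the bound being vacuous when $\inftynorm{\nabla(u,\rho)}=\infty$ (so that $n\geq 0$ is unproblematic) are the right ones; nothing further is needed.
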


\begin{proposition}\label{prop: energy estimates SQG}
    Let $\theta\in C([0,T],H^n(\R^2))$ be a solution to \eqref{SQG} with initial data  $\theta_0\in H^n(\R^2)$ for $T\geq 0$ and $n\geq 0$. Then for $0\leq t\leq T$ there holds
  \begin{align*}
  \norm{\theta(t)}_{H^n}^2-\norm{\theta_0}_{H^n}^2&\lesssim \int_0^t\inftynorm{\nabla (u,\theta)(s)}\norm{\theta(s)}_{H^n}^2ds.
  \end{align*}
  \end{proposition}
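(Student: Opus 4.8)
The plan is to run the standard energy method directly on \eqref{SQG}, applying $\langle\nabla\rangle^n$ and pairing with $\langle\nabla\rangle^n\theta$ in $L^2$, so that
\[
\frac{1}{2}\ddt\norm{\theta}_{H^n}^2 = -\ltwoscalarproduct{\langle\nabla\rangle^n(u\cdot\nabla\theta)}{\langle\nabla\rangle^n\theta} + \kappa\,\ltwoscalarproduct{\langle\nabla\rangle^n\mathcal{R}_1\theta}{\langle\nabla\rangle^n\theta}.
\]
I would first dispose of the dispersive term: $\mathcal{R}_1$ is the Fourier multiplier with the purely imaginary, odd symbol $-i\xi_1|\xi|^{-1}$, hence skew-adjoint, and it commutes with $\langle\nabla\rangle^n$, so the last pairing equals $\int(-i\xi_1|\xi|^{-1})\langle\xi\rangle^{2n}|\widehat{\theta}|^2\,d\xi=0$, the integrand being odd in $\xi$ since $\theta$ is real-valued. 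Thus the linear dispersion produces no $H^n$ growth, exactly as for the $L^2$ norm, and only the transport term must be controlled.

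For the transport term the plan is to commute $\langle\nabla\rangle^n$ past the velocity, $\langle\nabla\rangle^n(u\cdot\nabla\theta)=u\cdot\nabla\langle\nabla\rangle^n\theta+[\langle\nabla\rangle^n,u\cdot\nabla]\theta$. Since $u=\nabla^\perp(-\Delta)^{-1/2}\theta$ is divergence-free, the first piece contributes nothing, as $\ltwoscalarproduct{u\cdot\nabla\langle\nabla\rangle^n\theta}{\langle\nabla\rangle^n\theta}=\frac12\int u\cdot\nabla|\langle\nabla\rangle^n\theta|^2=0$ after integration by parts. The commutator is then handled by the Kato--Ponce (fractional Leibniz) commutator estimate
\[
\norm{[\langle\nabla\rangle^n,u]\cdot\nabla\theta}_{L^2}\lesssim \inftynorm{\nabla u}\,\norm{\theta}_{H^n}+\norm{u}_{H^n}\,\inftynorm{\nabla\theta},
\]
where I would use that the symbol $\xi^\perp|\xi|^{-1}$ of $\nabla^\perp(-\Delta)^{-1/2}$ has unit modulus to obtain the exact identity $\norm{u}_{H^n}=\norm{\theta}_{H^n}$. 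A Cauchy--Schwarz pairing against $\langle\nabla\rangle^n\theta$ then bounds the transport contribution by $\inftynorm{\nabla(u,\theta)}\,\norm{\theta}_{H^n}^2$.

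Collecting the two terms gives $\ddt\norm{\theta}_{H^n}^2\lesssim\inftynorm{\nabla(u,\theta)}\,\norm{\theta}_{H^n}^2$, and integrating from $0$ to $t$ yields the stated inequality. I expect the only genuine difficulty to be making these formal manipulations rigorous at the level of $H^n$ regularity: the energy identity and the cancellation from $\mathrm{div}\,u=0$ require a regularization argument (mollify the equation, derive the estimate for the smooth approximations, and pass to the limit), and the single analytic input is the fractional Leibniz commutator bound above. Everything else is algebraic; in particular, for $n=0$ the commutator is absent and one recovers exact conservation of $\norm{\theta}_{L^2}$, while for $n>0$ the estimate is only nontrivial when $\nabla(u,\theta)\in L^\infty$, consistent with the norm appearing on the right-hand side.
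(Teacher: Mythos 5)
Your proof is correct and coincides with the standard energy-method argument that the paper itself invokes (it does not write out a proof but cites \cite{JW24} and \cite{Elgindi_2015} for exactly this computation): skew-adjointness of $\mathcal{R}_1$ annihilates the dispersive term, $\mathrm{div}\,u=0$ annihilates the leading transport pairing, and the Kato--Ponce commutator estimate together with $\norm{u}_{H^n}=\norm{\theta}_{H^n}$ yields the stated bound after integration in time.
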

  \subsection{Localizations}\label{sec:localizations}
In order to prove the main results, and in particular overcome the fact that the Riesz transform is not bounded in $L^\infty$, we will localize in frequency and use Besov norms for intermediate steps in the proof. Let $\psi\in C^{\infty}(\R, [0,1])$ be a even bump function with $\supp{\psi}\subset[-2,2]$ and $\psi|_{[-1,1]}\equiv 1$. Let $\varphi(x)\vcentcolon=\psi(x)-\psi(2x)$. We denote the standard  Littlewood-Paley projections $P_k$ of a function $f:\R^d\to \R$ by
\begin{align}
    \widehat{P_kf}(\xi)\vcentcolon=\varphi(2^{-k}|\xi|)\widehat{f}(\xi), \qquad k\in \Z.
\end{align}
These operators enjoy many useful properties that can be found in standard textbooks, see e.g.\ \cite[\textcolor{MidnightBlue}{Appendix A}]{taononlinear}. In particular, they can be used to define the homogeneous Besov norms used throughout this paper (see also \cite[\textcolor{MidnightBlue}{\S 2.7}]{Bahouri-Chemin-Danchin-Fourier}):
\begin{align}
    \|f\|_{\dot{B}_{p,q}^s}=\Big(\sum_{k\in \Z}(2^{sk}\norm{P_kf}_{L^p})^q\Big)^{\frac1q}. 
\end{align}
\end{section}
\begin{section}{Dispersive estimates}\label{sec:Strichartz-estimates}
The linear semigroup of the problem satisfies the following dispersive decay estimate proven in \cite[\textcolor{MidnightBlue}{Proposition 2.1}]{Elgindi_2015}.
\begin{proposition}\label{prop:dispersive-decay}
    Let $f\in C^{\infty}_c(\R^2)$ and $\Lambda_\kappa$ be given by \eqref{def:dispersive-unknowns}. Then there holds
    \begin{align*}
        \inftynorm{e^{\pm it\Lambda_\kappa}f}\lesssim (\kappa t)^{-1/2}\norm{f}_{\dot{B}_{1,1}^2}.
    \end{align*}
\end{proposition}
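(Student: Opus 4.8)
The plan is to combine a Littlewood--Paley decomposition with a stationary-phase analysis of the convolution kernel; the point of the $\dot{B}^2_{1,1}$ norm is that it carries precisely the frequency weight generated by the kernel bound. First I would reduce to a single frequency-localized piece. Writing $e^{\pm it\Lambda_\kappa}P_kf=\check{K_k}*P_kf$ with $\widehat{K_k}(\xi)=e^{\pm it\Lambda_\kappa(\xi)}\tilde\varphi(2^{-k}\absxi)$ for a fattened cutoff $\tilde\varphi\equiv1$ on $\supp\varphi$ (so that $P_k$ is reproduced), Young's inequality gives $\inftynorm{e^{\pm it\Lambda_\kappa}P_kf}\le\norm{\check{K_k}}_{L^\infty}\norm{P_kf}_{L^1}$. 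Since $\Lambda_\kappa$ is homogeneous of degree $0$, the substitution $\xi=2^k\eta$ leaves the phase $\eta_1/\abseta$ unchanged and pulls out a clean factor: $\norm{\check{K_k}}_{L^\infty}\lesssim 2^{2k}\sup_y\abs{I(y,\kappa t)}$, where $I(y,\tau):=\int_{\R^2}e^{i(y\cdot\eta\pm\tau\eta_1/\abseta)}\tilde\varphi(\abseta)\,d\eta$. Granting the key bound $\sup_y\abs{I(y,\tau)}\lesssim\tau^{-1/2}$, summing over $k$ and using $\inftynorm{e^{\pm it\Lambda_\kappa}f}\le\sum_k\inftynorm{e^{\pm it\Lambda_\kappa}P_kf}$ reproduces exactly $(\kappa t)^{-1/2}\sum_k2^{2k}\norm{P_kf}_{L^1}=(\kappa t)^{-1/2}\norm{f}_{\dot{B}^2_{1,1}}$; the trivial bound $\abs{I}\lesssim1$ over the compact annulus handles $\kappa t\le1$.

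The heart of the matter is the oscillatory integral $I(y,\tau)$. Because the symbol $\eta_1/\abseta$ is homogeneous of degree $0$, its phase is constant along rays, so the large-$\tau$ oscillation is purely angular and all stationary-phase gain must come from the angular variable; this degeneracy is exactly why one obtains the rate $\tau^{-1/2}$ rather than $\tau^{-1}$. Passing to polar coordinates $\eta=r(\cos\theta,\sin\theta)$, the angular phase collapses to $(ry_1\pm\tau)\cos\theta+ry_2\sin\theta=A(r)\cos(\theta-\theta_0)$ with $A(r)=\sqrt{(ry_1\pm\tau)^2+(ry_2)^2}$, so the inner integral is exactly a Bessel function, $\int_0^{2\pi}e^{iA\cos(\theta-\theta_0)}\,d\theta=2\pi J_0(A(r))$. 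I would then invoke the standard decay $\abs{J_0(A)}\lesssim(1+A)^{-1/2}$, reducing everything to the one-dimensional radial estimate $\int_c^C(1+A(r))^{-1/2}\,dr\lesssim\tau^{-1/2}$ over the compact annulus $r\in\supp\tilde\varphi\subset[c,C]$ with $c,C\sim1$.

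For the radial estimate I would complete the square: $A(r)^2=\abs{y}^2(r-r_0)^2+h^2$ with $r_0=\mp\tau y_1/\abs{y}^2$ and $h=\tau\abs{y_2}/\abs{y}$, noting $A(0)=\tau$ and $h^2+\abs{y}^2r_0^2=\tau^2$. Whenever $A$ is bounded below by a fixed multiple of $\tau$ on $[c,C]$ (e.g. $\abs{y}\lesssim\tau$ with $y$ not nearly parallel to $e_1$, or $\abs{y}\gtrsim\tau$, which forces $A\gtrsim\abs{y}\ge\tau$), the estimate $(1+A)^{-1/2}\lesssim\tau^{-1/2}$ integrates trivially over the fixed-length interval. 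The delicate case, and what I expect to be the main obstacle, is the \emph{glancing} regime $\abs{y}\sim\tau$ with $y$ nearly aligned with $e_1$: there $h$ is small, $r_0\in[c,C]$, and $A$ nearly vanishes inside the annulus, which is precisely where the angular critical point collides with the degenerate direction $\eta_1=0$ and where the $\tau^{-1/2}$ bound is saturated. I would treat it by splitting the integral at $\abs{r-r_0}=h/\abs{y}$, bounding the short inner piece by its length times $(1+h)^{-1/2}$ and the outer piece by $\int(1+\abs{y}\,\abs{r-r_0})^{-1/2}\,dr$, and then checking, using $\abs{y}\sim\tau$ and $h\le\tau$, that each contribution is $O(\tau^{-1/2})$. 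Establishing this uniform-in-$y$ control in the glancing region is what pins down the sharp $(\kappa t)^{-1/2}$ rate; assembling the pieces and summing the Littlewood--Paley series then completes the proof.
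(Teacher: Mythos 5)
Your proof is correct, but note that the paper does not contain its own proof of this proposition: it is stated as proven in \cite[Proposition 2.1]{Elgindi_2015}, so the relevant comparison is with that reference rather than with an in-paper argument. Your route --- Littlewood--Paley decomposition plus Young's inequality, exploiting the degree-zero homogeneity of $\Lambda_\kappa$ to rescale each dyadic kernel onto a unit annulus at the cost of $2^{2k}$, polar coordinates collapsing the angular integral exactly to $2\pi J_0(A(r))$ with $A(r)=\abs{ry\pm\tau e_1}$, the Bessel decay $\abs{J_0(A)}\lesssim(1+A)^{-1/2}$, and a uniform-in-$y$ radial estimate obtained by completing the square --- is precisely the standard argument for this anisotropic semigroup and is essentially that of the cited reference, including the correct identification of the glancing regime $\abs{y}\sim\tau$ with $y$ nearly parallel to $e_1$ as the configuration that saturates the $\tau^{-1/2}$ rate.

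Two small points of precision. First, your intermediate claim that $\abs{y}\gtrsim\tau$ forces $A\gtrsim\abs{y}$ on the annulus is only valid for $\abs{y}\geq C_0\tau$ with $C_0$ large, since $A(r)\geq \abs{\,r\abs{y}-\tau\,}$ can vanish when $\abs{y}\sim\tau$; the gap is harmless because your glancing-case inequality $A(r)\geq\abs{y}\,\abs{r-r_0}$, integrated over the $O(1)$-length annulus, already gives $O(\abs{y}^{-1/2})=O(\tau^{-1/2})$ for \emph{every} $y$ with $\abs{y}\gtrsim\tau$, irrespective of its direction. In fact the whole case analysis compresses to two cases: if $\abs{y}\leq\tau/(2C)$ then $A(r)\geq\tau-C\abs{y}\gtrsim\tau$ pointwise, while if $\abs{y}\geq\tau/(2C)$ then $\int_c^C(1+\abs{y}\abs{r-r_0})^{-1/2}\,dr\lesssim\abs{y}^{-1/2}\lesssim\tau^{-1/2}$, so neither the splitting at $\abs{r-r_0}=h/\abs{y}$ nor any tracking of $h$ is needed. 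Second, before summing the dyadic pieces you should record that $f=\sum_{k\in\Z}P_kf$ (valid a.e.\ for $f\in C_c^{\infty}(\R^2)$) so that the triangle inequality applies; this is routine.
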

\begin{remark}
    In the following, we will use the dispersive estimate above for the dyadically localized pieces:
     \begin{equation}\label{local-piece-decay}
        \inftynorm{e^{it\Lambda_\kappa}P_k f}\leq C(\kappa t)^{-1/2}2^{2k}\norm{P_k f}_{L^1}.
    \end{equation}
\end{remark}
 A standard result using interpolation between the $L^1-L^\infty$ decay and the $L^2$ conservation gives the following decay rate of the semigroup in $L^p$ spaces.
    \begin{lemma}\label{lemma:lp-decay} For any $ 2\leq p\leq \infty$, there holds
       \begin{align*}
           \lpnorm{e^{it\Lambda_\kappa}P_kf}{p}\leq c_p (\kappa t)^{-\frac{1}{2}+\frac{1}{p}}2^{(2-\frac{4}{p})k}\lpnorm{P_kf}{p'}
       \end{align*} 
    \end{lemma}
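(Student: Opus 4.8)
The plan is to interpolate between the sharp $L^1 \to L^\infty$ dispersive estimate \eqref{local-piece-decay} and the conservation of the $L^2$ norm under the unitary semigroup $e^{it\Lambda_\kappa}$. First I would record the two endpoint bounds for a fixed Littlewood-Paley piece $P_kf$: on the one hand, Plancherel together with the fact that $\Lambda_\kappa$ is a real-valued multiplier gives the isometry $\ltwonorm{e^{it\Lambda_\kappa}P_kf} = \ltwonorm{P_kf}$, i.e.\ the $p=2$ endpoint with trivial decay; on the other hand, \eqref{local-piece-decay} furnishes the $p=\infty$ endpoint $\inftynorm{e^{it\Lambda_\kappa}P_kf}\leq C(\kappa t)^{-1/2}2^{2k}\lpnorm{P_kf}{1}$.

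Next I would apply the Riesz--Thorol interpolation theorem to the linear operator $T_{k,t} := e^{it\Lambda_\kappa}P_k$, viewed as acting $L^{p'}\to L^p$ for the dual exponents. For $2\leq p\leq\infty$ write $\tfrac1p = \tfrac{1-\theta}{2} + \tfrac{\theta}{\infty}$, so that $\theta = 1 - \tfrac2p$, and correspondingly the domain exponent interpolates as $\tfrac{1}{p'} = \tfrac{1-\theta}{2}+\tfrac{\theta}{1} = 1-\tfrac1p$, confirming $p'$ is the conjugate of $p$. The operator-norm bound then interpolates multiplicatively: the $(\kappa t)^{-1/2}$ factor is raised to the power $\theta = 1-\tfrac2p$, giving the decay rate $(\kappa t)^{-\frac12+\frac1p}$, and the frequency weight $2^{2k}$ is likewise raised to $\theta$, producing $2^{(2-\frac4p)k}$. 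This reproduces exactly the claimed constants, so that $\lpnorm{e^{it\Lambda_\kappa}P_kf}{p}\leq c_p(\kappa t)^{-\frac12+\frac1p}2^{(2-\frac4p)k}\lpnorm{P_kf}{p'}$.

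I expect the only genuine subtlety to be a technical one rather than conceptual: ensuring the hypotheses of complex interpolation are met. Since \eqref{local-piece-decay} is stated for smooth compactly supported $f$ via Proposition \ref{prop:dispersive-decay}, I would either argue on a dense class and extend by density, or appeal to the fact that for each fixed $t$ the frequency-localized operator $T_{k,t}$ is bounded on every $L^q$ (its kernel is Schwartz owing to the compact frequency support of $P_k$), so Riesz--Thorin applies directly. The $L^2$-conservation step uses that $e^{it\Lambda_\kappa}$ has a unimodular Fourier symbol $e^{it\kappa\xi_1/\absxi}$, hence is a Fourier multiplier of modulus one, making it an $L^2$-isometry commuting with $P_k$. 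With both endpoints established on a suitable common domain, the interpolation is routine and the constant $c_p$ absorbs the interpolated constant from \eqref{local-piece-decay}, completing the proof.
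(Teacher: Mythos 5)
Your proof is correct and takes exactly the paper's (unstated) route: the paper presents this lemma as ``a standard result using interpolation between the $L^1$--$L^\infty$ decay and the $L^2$ conservation'' with no further detail, and your Riesz--Thorin argument between \eqref{local-piece-decay} and the $L^2$-isometry of the unimodular multiplier $e^{it\Lambda_\kappa}$, with the exponent bookkeeping $\theta = 1-\tfrac{2}{p}$, is precisely that standard argument carried out. (Two cosmetic remarks: the theorem is Riesz--Thorin, not ``Riesz--Thorol,'' and to get $\lpnorm{P_kf}{p'}$ rather than $\lpnorm{f}{p'}$ on the right one applies the interpolated bound for a fattened projection $\widetilde{P_k}$ to the function $P_kf$, using $\widetilde{P_k}P_k=P_k$ and the uniform $L^p$-boundedness of the projections.)
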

  Finally, we can prove the main tool to achieve Theorems \ref{main-result-BQ}, \ref{main-result-SQG}. This is given by the following standard Strichartz estimates.
    \begin{theorem}\label{Strichartz-Sobolev}
        Let $f \in \mathcal{S}(\R^2) $, $F \in \mathcal{S}(\R^3)$ and $\Lambda_\kappa$ be the dispersion relation \eqref{def:dispersive-unknowns}. Moreover, let $(q,r)$ be a $\frac12$-admissible pair, that is
        \begin{align}
            \frac{1}{q}+\frac{1}{2r}= \frac{1}{4}, \qquad 4\leq q\leq \infty, \; 2\leq r\leq \infty.
        \end{align}
        Then, from \eqref{local-piece-decay}, there holds
        \begin{enumerate}
            \item \label{Strichartz-1}$\norm{e^{it\Lambda_\kappa}P_k f}_{L^q_tL^r_x}\lesssim_q \kappa^{-\frac{1}{q}} 2^{\frac{4k}{q}} \ltwonorm{P_k f}$;
            \item \label{Strichartz-2}$\norm{\int_\R e^{-is\Lambda_\kappa}P_k F(s,x)ds}_{L^2_x} \lesssim_q \kappa^{-\frac{1}{q}} 2^{\frac{4}{q}k}\norm{P_kF}_{L_t^{q'}L_x^{r'}}$, where $\frac{1}{q}+\frac{1}{q'}=\frac{1}{r}+\frac{1}{r'}=1$;
            \item \label{Strichartz-3} for any $(q_i,r_i)$ $\frac12-$admissible pairs, $i=1,2$ there holds
            \begin{align*}
                \norm{\int_0^t e^{i(t-s)\Lambda_\kappa}P_k F(s,x)ds}_{L_t^{q_1}L_x^{r_1}} \lesssim_{q_1,q_2} \kappa^{-\frac{1}{q_1} - \frac{1}{q_2}} 2^{\frac{4k}{q_1} + \frac{4k}{q_2}} \norm{P_k F}_{L_t^{q_2'}L_x^{r_2'}},  
            \end{align*}
          where $\frac{1}{q_i}+\frac{1}{q_i'}=\frac{1}{r_i}+\frac{1}{r_i'}=1$.
        \end{enumerate}
    \end{theorem}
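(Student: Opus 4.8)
The three estimates follow from the standard $TT^*$ machinery adapted to the frequency-localized decay \eqref{local-piece-decay}, together with the $L^2$-conservation of the linear semigroup. The starting observation is that, since the symbol $\kappa\xi_1|\xi|^{-1}$ of $\Lambda_\kappa$ is real, the multiplier $e^{it\Lambda_\kappa}$ is unitary on $L^2$, so $\norm{e^{it\Lambda_\kappa}P_k f}_{L^2}=\norm{P_kf}_{L^2}$; interpolating this with \eqref{local-piece-decay} is exactly Lemma \ref{lemma:lp-decay}, which records the $L^{p'}\to L^p$ decay I will use repeatedly. The key bookkeeping fact is that the admissibility relation $\tfrac1q+\tfrac1{2r}=\tfrac14$ is precisely the scaling under which the kernel $|t-s|^{-(1/2-1/r)}$ produced by the $(\kappa|t-s|)^{-1/2}$ decay maps $L^{q'}_t$ to $L^q_t$ by the Hardy--Littlewood--Sobolev (HLS) inequality; tracking this correspondence is what pins down the powers of $\kappa$ and $2^k$.

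The plan is to prove \eqref{Strichartz-1} by a $TT^*$ argument and read off \eqref{Strichartz-2} as its dual. Writing $Tf\vcentcolon=e^{it\Lambda_\kappa}P_kf$, the adjoint is $T^*G=\int_\R e^{-is\Lambda_\kappa}P_k G(s)\,ds$, so \eqref{Strichartz-2} is the statement $\norm{T^*}_{L^{q'}_tL^{r'}_x\to L^2_x}\lesssim \kappa^{-1/q}2^{4k/q}$, equivalent to \eqref{Strichartz-1} by duality. It therefore suffices to bound $TT^*G(t)=\int_\R e^{i(t-s)\Lambda_\kappa}P_k^2 G(s)\,ds$ in $L^q_tL^r_x$. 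Applying Lemma \ref{lemma:lp-decay} with $p=r$ inside the $s$-integral gives
\[
\norm{TT^*G(t)}_{L^r_x}\lesssim 2^{(2-4/r)k}\int_\R (\kappa|t-s|)^{-(1/2-1/r)}\norm{P_kG(s)}_{L^{r'}_x}\,ds,
\]
and the HLS inequality in $t$ with exponent $\gamma=\tfrac12-\tfrac1r$ — admissible exactly because $\tfrac2q=\gamma$ — yields $\norm{TT^*G}_{L^q_tL^r_x}\lesssim \kappa^{-2/q}2^{8k/q}\norm{G}_{L^{q'}_tL^{r'}_x}$, using $2-\tfrac4r=\tfrac8q$ and $\tfrac12-\tfrac1r=\tfrac2q$. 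The $TT^*$ lemma then gives \eqref{Strichartz-1} with constant $\kappa^{-1/q}2^{4k/q}$. The endpoint $r=2$, $q=\infty$ is excluded from the HLS step and handled directly by $L^2$-conservation; for $2<r\leq\infty$ one has $0<\gamma\leq\tfrac12<1$ and $4\leq q<\infty$, so no Keel--Tao-type endpoint difficulty arises.

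For the inhomogeneous estimate \eqref{Strichartz-3} with two possibly distinct admissible pairs, the idea is to factor the full-line operator. Setting $g\vcentcolon=\int_\R e^{-is\Lambda_\kappa}P_kF(s)\,ds$, which is frequency-supported in $\{|\xi|\sim2^k\}$, one has $\int_\R e^{i(t-s)\Lambda_\kappa}P_kF(s)\,ds=e^{it\Lambda_\kappa}g$; applying \eqref{Strichartz-1} (for the pair $(q_1,r_1)$, valid for any function localized at frequency $2^k$) and then bounding $\norm{g}_{L^2}$ by \eqref{Strichartz-2} (for the pair $(q_2,r_2)$) controls the full-line version by $\kappa^{-1/q_1-1/q_2}2^{4k/q_1+4k/q_2}\norm{P_kF}_{L^{q_2'}_tL^{r_2'}_x}$. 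The remaining and genuinely inhomogeneous point — replacing $\int_\R$ by the retarded integral $\int_0^t$ — is where the Christ--Kiselev lemma enters, and this is the step I expect to be the main (if standard) obstacle. Its hypothesis is the strict ordering $q_1>q_2'$ of the time exponents, which holds automatically: $\tfrac12$-admissibility forces $q_i\geq 4$, whence $q_2'\leq\tfrac43<4\leq q_1$. This upgrades the full-line bound to the retarded one with the stated constants and completes the proof.
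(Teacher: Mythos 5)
Your proof is correct and follows essentially the same route as the paper's: frequency-localized decay (Lemma \ref{lemma:lp-decay}) combined with Hardy--Littlewood--Sobolev in time, duality between \eqref{Strichartz-1} and \eqref{Strichartz-2}, and composition of the two plus the Christ--Kiselev lemma for \eqref{Strichartz-3}. The only difference is organizational---the paper proves \eqref{Strichartz-2} first via the bilinear $TT^*$-type computation and gets \eqref{Strichartz-1} by duality, whereas you bound $TT^*$ and invoke the $TT^*$ lemma---and you in fact make explicit two details the paper leaves implicit, namely the endpoint $(q,r)=(\infty,2)$ and the verification of the Christ--Kiselev hypothesis $q_2'<q_1$.
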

    \begin{proof}
     The proof begins by establishing the second statement. We compute using Lemma \ref{lemma:lp-decay} and the Hardy-Littlewood-Sobolev inequality:
      \begin{align*}
          \ltwonorm{\int_\R e^{-is\Lambda_\kappa}P_kF(s,\cdot)ds}^2
          &= \int_\R\int_\R \langle e^{i(t-s)\Lambda_\kappa}P_kF(s,\cdot),P_kF(t,\cdot)\rangle_{L^2_x}dsdt\\
          &\leq \int_\R\int_\R \norm{e^{i(t-s)\Lambda_\kappa}P_kF(s,\cdot)}_{L_x^r} ds \norm{P_kF(t,\cdot)}_{L_x^{r'}}dt\\
          &\lesssim \int_\R\int_\R (\kappa(t-s))^{-\frac{1}{2}+\frac{1}{r}}2^{(2-\frac{4}{r})k}\norm{P_kF(s,\cdot)}_{L_x^{r'}}ds \norm{P_kF(t,\cdot)}_{L_x^{r'}}dt\\
          &\lesssim \kappa^{-\frac{1}{2}+\frac{1}{r}} 2^{(2-\frac{4}{r})k}\norm{|t|^{-\frac{1}{2}+\frac{1}{r}}*\norm{P_kF(t,\cdot)}_{L_x^{r'}}}_{L_t^q}\norm{P_kF(t,\cdot)}_{L_t^{q'}L_x^{r'}}\\
          &\lesssim \kappa^{-\frac{2}{q}} 2^{\frac{8}{q}k} \norm{P_kF(t,\cdot)}_{L_t^{q'}L_x^{r'}}^2.
      \end{align*}
      \par The first statement follows from the second by duality. Indeed, for $F\in L_t^{q'}L_x^{r'}$ there holds
      \begin{align*}
          \langle e^{it\Lambda_\kappa} P_kf, F\rangle_{ L_t^{q}L_x^{r}}&=\int_t\int_x  e^{it\Lambda_\kappa} P_kf(x)\overline{F(t,x)}dx dt\\
         % &=\int_\R\langle  e^{it\Lambda_\kappa} f(\cdot),F(t,\cdot)\rangle_{L_x^2}dt\\
          &=\langle  P_kf,\int_\R e^{-it\Lambda_\kappa}F(t,\cdot)dt\rangle_{L_x^2}\\
          &\leq \ltwonorm{P_k f}\ltwonorm{\int_\R e^{-it\Lambda_\kappa}\widetilde{P_k}F(t,\cdot)dt}\\
          &\lesssim \kappa^{-\frac{1}{q}} 2^{\frac{4}{q}k}\ltwonorm{P_kf} \norm{F}_{L_t^{q'}L_x^{r'}},
      \end{align*}
      where $\widetilde{P_k}$ is the Littlewood-Paley operator associated to a flattened bump $\tilde{\varphi}$ with $\tilde{\varphi}\equiv1$ on $\mathrm{supp}(\varphi)$.
      Finally, by combining \eqref{Strichartz-1} and \eqref{Strichartz-2},
      \begin{align*}
          \norm{\int_\R e^{i(t-s)\Lambda_\kappa}P_kF(s,x)ds}_{L_t^{q_1}L_x^{r_1}}&=\norm{e^{it\Lambda_\kappa}\int_\R e^{-is\Lambda_\kappa}P_kF(s,x)ds}_{L_t^{q_1}L_x^{r_1}}\\
          &\leq c\kappa^{-\frac{1}{q_1}}2^{\frac{4}{q_1}k}\ltwonorm{\int_\R e^{-is\Lambda_\kappa}P_kF(s,x)ds}\\
          &\leq c\kappa^{-\frac{1}{q_1}}2^{\frac{4}{q_1}k}\kappa^{-\frac{1}{q_2}}2^{\frac{4}{q_2}k}\norm{P_kF}_{L_t^{q_2'}L_x^{r_2'}},
      \end{align*}
      from which we can obtain \eqref{Strichartz-3} using Christ-Kiselev lemma \cite{Christ-Kiselev} as in \cite{taononlinear}.
 
    \end{proof}

\end{section}

\section{Proof of Theorems \ref{main-result-BQ},\ref{main-result-SQG}}\label{sec:proof-main-thms}

\blue{The proof follows via a bootstrap argument presented in Proposition \ref{bootstrap-prop-final}. For notational convenience, we define the space in which we conduct the bootstrap argument by $B := \dot{B}^0_{\infty,1} \cap \dot{B}^1_{\infty,1}$. In particular, the respective norm $\norm{\cdot}_B$ for a function $f$ is given by
\begin{equation}\label{eqn:B-norm}
    \norm{f}_B \vcentcolon = \norm{f}_{\dot{B}^0_{\infty,1}} + \norm{f}_{\dot{B}^1_{\infty,1}}.
\end{equation}}
  \begin{proof}[Proof of Theorems \ref{main-result-BQ}, \ref{main-result-SQG}]\label{proof:main-result-BQ} We recall that the Boussinesq system \eqref{eqn: perturbed BQ} is equivalent to the respective system  \eqref{eqn: BQ on Z+, Z-} for the dispersive unknowns $Z_\pm$ \blue{with the initial data $Z_{\pm,0}=Z_\pm(0)$}. Due to the energy balance \eqref{eqn: energy conservation Z+ Z-}, $Z_{\pm,0}$ satisfies
  
  %Due to the energy balance \eqref{eqn: energy conservation Z+ Z-}, the initial data $Z_{\pm,0}\blue{=Z_\pm(0)}$ satisfies
\begin{align}
    \label{eqn:initial-data-BQ-Z-pm}
    \|Z_{\pm,0}\|_{H^n}\leq \sqrt{2}\eps.
\end{align}
    In this framework, the theorem follows from the local well-posedness theory for the Boussinesq equation and a continuity argument provided by Proposition \ref{bootstrap-prop-final} relying on the Strichartz estimates from Section \ref{sec:Strichartz-estimates}. The bootstrap argument is carried on the energy norms $\|\cdot\|_{L_t^\infty H^n_x}$ and the Besov norms \blue{$\|\cdot\|_{L_t^1 B_x}$}. The choice of this $L^\infty$-based norm in space is primarily  motivated by the blow-up criterion and is discussed in the introduction Section \ref{sec:about-the-proof} and in the proof of Proposition \ref{bootstrap-prop-final}. We note here that the bootstrap norm is locally bounded for $t>0$ within the local well-posedness timescale and for $\delta \in (0,n-2]$:
\begin{align*}
    \blue{\norm{Z_\pm(t)}_{B}} =\sum_{k\in \Z}(2^k+1)\|P_kZ_\pm(t)\|_{L^\infty}\lesssim \sum_{k\in \Z}2^k(2^k+1)\|P_kZ_\pm(t)\|_{L^2}\lesssim \|Z_\pm(t)\|_{H^{2+\delta}},
\end{align*}
since the initial data is in $H^n$ for $n>3$.
Theorem \ref{main-result-SQG} for the SQG equation follows similarly from the local well-posedness and the respective statements for a solution $\theta$ from Proposition \ref{bootstrap-prop-final}.
\end{proof}

\subsection{Bootstrap argument}
\begin{proposition}\label{bootstrap-prop-final}
Let $n>3$ and \blue{$4 \leq q\leq \infty$}. Then there exists $C_{q,n}>0$ such that if $T \leq C_{q,n} \kappa^{\frac{1}{q-1}} \varepsilon^{-\frac{q}{q-1}}$, and $Z_\pm \in C([0,T], H^n(\R^2))$ resp. $\theta\in C([0,T], H^n(\R^2))$ solve \eqref{eqn: BQ on Z+, Z-} resp. \eqref{SQG} with initial data satisfying \eqref{eqn:initial-data-BQ-Z-pm} resp. \eqref{eq:intial-data-SQG} and moreover if for any $\tau \in [0,T]$ there holds
\begin{equation}
\begin{gathered}\label{bootstrap-assumption-second}
    \norm{(Z_+,Z_-)}_{L_t^\infty([0,\tau], H_x^n)}\leq 100\eps, \quad \text{resp.} \quad \norm{\theta}_{L_t^\infty([0,\tau], H_x^n)}\leq 100 \eps, \\
    \norm{(Z_+,Z_-)}_{L_t^1([0,\tau], \blue{B_x})} \leq 2, \quad \text{resp.} \quad \norm{\theta}_{L_t^1([0,\tau], \blue{B_x})} \leq 2,
\end{gathered}
\end{equation}
    then in fact the \blue{improved bounds} hold:
\begin{equation}\label{eqn:bootstrap-improved}
\begin{gathered}
    \norm{(Z_+,Z_-)}_{L_t^\infty([0,\tau], H_x^n)}\leq 10\eps, \quad \text{resp.} \quad \norm{\theta}_{L_t^\infty([0,\tau], H_x^n)}\leq 10 \eps, \\
    \norm{(Z_+,Z_-)}_{L_t^1([0,\tau], \blue{B_x})} \leq 1, \quad \text{resp.} \quad \norm{\theta}_{L_t^1([0,\tau], \blue{B_x})} \leq 1.
\end{gathered}
\end{equation}
\end{proposition}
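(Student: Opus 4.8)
The plan is to close the two bootstrap quantities separately: the energy norm $\norm{(Z_+,Z_-)}_{L_t^\infty H_x^n}$ is routine, while the blow-up norm $\norm{(Z_+,Z_-)}_{L_t^1 B_{\infty,1}^1}$ is the technical core (this is Proposition \ref{infty-norm-bound-final}). Throughout I would work from the Duhamel representation \eqref{intro-Duhamel}, the identities \eqref{eqn:unknowns-in-terms-of-Z} relating $(u,\rho)$ to $Z_\pm$, and the key structural observation that on a single Littlewood--Paley block the Riesz-type multipliers appearing in \eqref{eqn:unknowns-in-terms-of-Z} and in the nonlinearity of \eqref{eqn: BQ on Z+, Z-} act boundedly; consequently $\inftynorm{\nabla(u,\rho)}\lesssim\norm{(Z_+,Z_-)}_{B_{\infty,1}^1}$, which is exactly why the $B_{\infty,1}^1$ norm is the right vehicle for the blow-up criterion.

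\textbf{Energy estimate.} For the $H^n$ bound I would insert the previous inequality into the energy estimate of Proposition \ref{prop: energy estimates BQ} (resp.\ Proposition \ref{prop: energy estimates SQG}) and apply Grönwall. Since $\int_0^\tau\inftynorm{\nabla(u,\rho)}\,dt\lesssim\norm{(Z_+,Z_-)}_{L_t^1 B_{\infty,1}^1}\leq 2$ by the bootstrap hypothesis \eqref{bootstrap-assumption-second}, and $\norm{Z_{\pm,0}}_{H^n}\leq\sqrt2\,\eps$ by \eqref{eqn:initial-data-BQ-Z-pm} together with \eqref{eqn: energy conservation Z+ Z-}, Grönwall gives $\norm{(Z_+,Z_-)}_{L_t^\infty H_x^n}\lesssim\eps$ with an absolute implied constant; the generous gap between the hypothesis constant $100$ and the target $10$ is arranged to absorb this factor, producing the improved bound. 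Notably this step uses only the Besov bootstrap hypothesis and not the smallness of $T$.

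\textbf{Besov estimate (the main point).} Here I would estimate $\sum_k 2^k\norm{P_kZ_\pm}_{L_t^1([0,\tau])L_x^\infty}$ directly from \eqref{intro-Duhamel}. On each block I first trade $L_x^\infty$ for $L_x^r$ via Bernstein and $L_t^1$ for $L_t^q$ via Hölder on $[0,\tau]$, at the cost of $2^{2k/r}\tau^{1-1/q}$ for an admissible pair $(q,r)$; admissibility forces $\tfrac{2}{r}+\tfrac{4}{q}=1$, so the frequency weights collapse cleanly. For the linear part I apply the homogeneous Strichartz estimate \eqref{Strichartz-1}, producing $\kappa^{-1/q}2^{4k/q}\ltwonorm{P_kZ_{\pm,0}}$; combining with the $\dot B^1$ weight $2^k$ leaves $2^{2k}$, and summation gives $\kappa^{-1/q}\tau^{1-1/q}\norm{Z_{\pm,0}}_{\dot{B}^2_{2,1}}\lesssim\kappa^{-1/q}\tau^{1-1/q}\eps$ since $n>2$. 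For the nonlinear part I apply the inhomogeneous Strichartz estimate \eqref{Strichartz-3} with output pair $(q,r)$ and dual pair chosen to be the energy endpoint $(q_2,r_2)=(\infty,2)$, so that the nonlinearity is measured in $L_t^1 L_x^2$ and the block contributes $\kappa^{-1/q}2^{4k/q}\norm{P_k\mathcal{NL}_\pm}_{L_t^1 L_x^2}$. It then remains to control $\sum_k 2^{2k}\norm{P_k\mathcal{NL}_\pm}_{L_t^1 L_x^2}=\norm{\mathcal{NL}_\pm}_{L_t^1\dot{B}^2_{2,1}}$; since $\mathcal{NL}_\pm$ is a first-order quadratic expression in $Z_\pm$, a Bony paraproduct decomposition lets me place the derivative-carrying factor in $L_x^2$ (controlled in $L_t^\infty$ by the energy norm) and the remaining factor in $L_x^\infty$ (controlled in $L_t^1$ by the Besov bootstrap norm), yielding $\lesssim\norm{(Z_+,Z_-)}_{L_t^\infty H_x^n}\norm{(Z_+,Z_-)}_{L_t^1 B_{\infty,1}^1}\lesssim\eps$. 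The frequency summation at this step is precisely where $n>3$ is forced, corresponding to the $\delta$-derivative loss of \eqref{eqn:summation-L^2}; the low-frequency pieces encoded by the $+1$ in the $B_{\infty,1}^1$ weight are handled analogously via $L^2$ bounds.

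Collecting, both linear and nonlinear contributions to the Besov norm are $\lesssim\kappa^{-1/q}\tau^{1-1/q}\eps$, and the hypothesis $T\leq C_{q,n}\kappa^{1/(q-1)}\eps^{-q/(q-1)}$ is exactly the condition making $\kappa^{-1/q}T^{1-1/q}\eps\lesssim C_{q,n}^{(q-1)/q}$, so taking $C_{q,n}$ small drives the Besov norm below $1$ and closes the second bootstrap; the case $q=4$ recovers the $\mathcal{O}(\eps^{-4/3})$ timescale of Theorem \ref{main-result-BQ}. The SQG argument is identical after replacing $\mathcal{NL}_\pm$ by $u\cdot\nabla\theta$ with $u=\nabla^\perp(-\Delta)^{-1/2}\theta$, which has the same order-$0$ times order-$1$ structure and the same dispersion relation $\Lambda_\kappa$. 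I expect the main obstacle to be this nonlinear Besov estimate: aligning the inhomogeneous Strichartz exponents with a paraproduct split into exactly one energy factor and one blow-up factor, while keeping the Riesz transforms under control blockwise and summing frequencies using only the room afforded by $n>3$.
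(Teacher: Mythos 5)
Your Besov estimate is, in structure and in detail, the paper's own proof of Proposition \ref{infty-norm-bound-final}: Duhamel, Bernstein plus H\"older in time on each block, the homogeneous Strichartz bound Theorem \ref{Strichartz-Sobolev}\eqref{Strichartz-1} for the linear term, the inhomogeneous bound Theorem \ref{Strichartz-Sobolev}\eqref{Strichartz-3} with dual pair $(q_2,r_2)=(\infty,2)$ for the nonlinearity, a paraproduct placing one factor in energy and one in the Besov bootstrap norm (the paper's Lemma with bound \eqref{product-lemma-final}), the summation loss \eqref{eqn:summation-L^2} forcing $n>3$, and the exponent arithmetic $\kappa^{-1/q}T^{1-1/q}\eps\lesssim C_{q,n}^{(q-1)/q}$. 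All of that is correct.

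The genuine gap is in your energy step, precisely where you claim it ``uses only the Besov bootstrap hypothesis and not the smallness of $T$.'' What that step actually yields is
\begin{equation}
\norm{Z_+(\tau)}_{H^n}^2+\norm{Z_-(\tau)}_{H^n}^2\leq\left(\norm{Z_{+,0}}_{H^n}^2+\norm{Z_{-,0}}_{H^n}^2\right)\exp\{K_nK\norm{(Z_+,Z_-)}_{L_t^1B^1_{\infty,1}}\}\leq 2\eps^2e^{2KK_n},
\end{equation}
where $K_n$ is the implicit constant of Proposition \ref{prop: energy estimates BQ} and $K$ the constant in the blockwise Riesz bound \eqref{infty-bound-by-Besov}. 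To conclude $\norm{(Z_+,Z_-)}_{L_t^\infty H^n}\leq 10\eps$ you need $e^{2KK_n}\leq 50$, and nothing guarantees this: $K$ and $K_n$ are fixed, uncontrolled universal constants, while the ``generous gap'' you invoke is a fixed numerical factor between $\sqrt2\,\eps$ and $10\eps$ (note the hypothesis constant $100$ never enters this estimate at all, so it provides no extra room). The paper closes this step in the opposite order: it first improves the Besov norm to $\norm{(Z_+,Z_-)}_{L_t^1B^1_{\infty,1}}\leq 300K_{q,n}C_{q,n}^{(q-1)/q}$ using the smallness of $T$, and only then inserts this quantity — which can be made arbitrarily small by shrinking $C_{q,n}$ — into the Gr\"onwall exponential, cf.\ \eqref{eqn:choice-of-constant-bootstrap}. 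So the smallness of $T$ is needed to close \emph{both} halves of the bootstrap, not just the Besov half; your argument is repaired by exactly this reordering, which stays entirely within the framework you set up.
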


\begin{proof}
We start by proving the statement for the SQG equation \eqref{SQG} to clearly illustrate the main argument. By the energy estimates we have the blow-up criterion
\begin{equation}
    \norm{\theta(\tau)}_{H^n}^2 \leq \norm{\theta_0}_{H^n}^2\exp\{ {K_n}\norm{\nabla (u, \theta)}_{L_t^1([0,\tau], L_x^\infty)}\}. 
\end{equation}
From Proposition \ref{infty-norm-bound-final} with $\delta=n-3$, we have
\begin{align}
    \norm{\nabla (u,\theta)}_{L_t^1([0,\tau], L_x^\infty)}& \leq K \sum_{k\in \Z} 2^k( \|P_k\nabla^\perp(-\Delta)^{-1/2}\theta\|_{L_t^1([0,\tau], L_x^\infty)} + \|P_k\theta\|_{L_t^1([0,\tau], L_x^\infty)})\\
    &\leq K\norm{\theta}_{L_t^1([0,\tau], \blue{B_x})} \\
    & \leq K[ K_{q,n} \tau^{1-\frac{1}{q}} \kappa^{-\frac{1}{q}} \norm{\theta_0}_{H^n}  + K_{q,n} \tau^{1-\frac{1}{q}} \kappa^{-\frac{1}{q}}\norm{\theta}_{H^n} \norm{\theta}_{L_t^1([0,\tau],\blue{B_x})}].
\end{align}
for a universal constant $K_{q,n}>0$. Hence, if $\tau \leq T$, together with the bootstrap assumption \eqref{bootstrap-assumption-second} we obtain the improved bounds \eqref{eqn:bootstrap-improved} by choosing $C_{q,n}>0$ such that
\begin{equation}\label{eqn:choice-of-constant-bootstrap}
\left\{
      \begin{aligned}
 &\norm{\theta}_{L_t^1([0,\tau], \blue{B_x})} \leq 300 K_{q,n} C_{q,n}^{\frac{q-1}{q}} \leq 1,\\
   &\exp\{300{K_n}K K_{q,n} C_{q,n}^{\frac{q-1}{q}} \}\leq 100.
\end{aligned}
    \right.
\end{equation}
 The Boussinesq system is treated analogously. By the energy estimates for $(u,\rho)$ provided by Proposition \ref{prop: energy estimates BQ} and the energy balance \eqref{eqn: energy conservation Z+ Z-} there holds:
\begin{align*}
    \|Z_+\|^2_{H^n}+\|Z_-\|^2_{H^n}\leq (\|Z_{+,0}\|^2_{H^n}+\|Z_{-,0}\|^2_{H^n})\exp\{K_n( \|\nabla u\|_{L_t^1([0,\tau], L_x^\infty)}+\|\nabla \rho\|_{L_t^1([0,\tau], L_x^\infty)})\}.
\end{align*}
The terms on the right-hand side are controlled by the bootstrap norm as follows:
 \begin{align*}
        \norm{\nabla (u, \rho)}_{L_t^1L_x^\infty}&= \frac12\|{\nabla \nabla^\perp\absnabla^{-1}(Z_++Z_-)}\|_{L_t^1L_x^\infty}+\frac12\norm{\nabla(Z_+-Z_-)}_{L_t^1L_x^\infty}\\
        &\leq K \sum_k 2^{k}\|P_k\nabla^\perp\absnabla^{-1}(Z_++Z_-)\|_{L_t^1L_x^\infty}+  \sum_k 2^{k}\|P_k(Z_+-Z_-)\|_{L_t^1L_x^\infty}\\
        &\leq K \|(Z_+,Z_-)\|_{L_t^1\blue{B_x}}.
    \end{align*}
    Finally, Proposition \ref{infty-norm-bound-final} provides a universal constant $K_{q,n}>0$ such that
    \begin{align*}
         & \|(Z_+,Z_-)\|_{L_t^1([0,\tau], \blue{B_x})}\\
         &\qquad\qquad\leq K_{q,n} \tau^{1-\frac{1}{q}} \kappa^{-\frac{1}{q}} \left(\norm{(Z_{+,0},Z_{-,0})}_{H^n}+ \|(Z_+,Z_-)\|_{L_t^1([0,\tau], \blue{B_x})}\norm{(Z_{+},Z_-)}_{H^{n}}\right).
    \end{align*}
    The improved bounds follow with the same choice of constant $C_{q,n}$ as in \eqref{eqn:choice-of-constant-bootstrap}.
\end{proof}

\subsection{Bounds on the bootstrap norm}\label{sec:bounds-on-boostrap-norm}
Finally, we prove the main bound on the bootstrap norm $\|\cdot\|_{L_t^1\blue{B_x}}$ defined in \eqref{eqn:B-norm} in Proposition \ref{infty-norm-bound-final} below. In order to control nonlinear terms in \eqref{eqn:SQG-Duhamel}, \eqref{Z+--in-prop}, we will use the following product lemma, where we see the bootstrap norm of Proposition \ref{bootstrap-prop-final} appears.
\begin{lemma}
For any $m \geq 0$ and $f \in \dot{B}^{1}_{\infty,1}(\mathbb{R}^d) \cap H^{m+1}(\mathbb{R}^d)$, $g \in \dot{B}^{0}_{\infty,1}(\mathbb{R}^d) \cap H^{m}(\mathbb{R}^d)$, if $u = \nabla^\perp (-\Delta)^{-1/2}g$, the following holds:
\begin{equation}\label{product-lemma-final}
    \norm{u\cdot\nabla f}_{H^m} + \norm{u \cdot (|\nabla|f)}_{H^m} \lesssim \norm{g}_{H^m}\norm{ f}_{\dot{B}^1_{\infty,1}} + \norm{g}_{\dot{B}^{0}_{\infty,1}} \norm{f}_{H^{m+1}}.
\end{equation}
\end{lemma}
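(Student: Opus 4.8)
The plan is to reduce the claim to a single tame product estimate and then dispatch the two summands uniformly. Writing $u=\nabla^\perp\absnabla^{-1}g$ and letting $F$ denote any of the first-order objects $\partial_1 f,\partial_2 f,\absnabla f$, both terms on the left are finite sums of scalar products of the form $v\,F$, where $v$ is a component of $u$, i.e.\ a zeroth-order Riesz-type multiplier applied to $g$. First I would record the two elementary reductions that isolate the role of the Besov norms. Since $\nabla^\perp\absnabla^{-1}$ has symbol of modulus one, it is an $H^s$-isometry, so $\norm{u}_{H^m}=\norm{g}_{H^m}$; and since on each dyadic annulus its symbol is a degree-zero bump, Bernstein/Mikhlin gives $\norm{P_k u}_{L^\infty}\lesssim\norm{\widetilde{P_k} g}_{L^\infty}$ uniformly in $k$, hence $\norm{u}_{\dot B^0_{\infty,1}}\lesssim\norm{g}_{\dot B^0_{\infty,1}}$. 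This is precisely why one works with $\dot B^0_{\infty,1}$: the Riesz transform is bounded there but not on $L^\infty$. Dually, since $F$ is first-order, $\norm{F}_{H^m}\lesssim\norm{f}_{H^{m+1}}$ and $\norm{F}_{\dot B^0_{\infty,1}}\lesssim\norm{f}_{\dot B^1_{\infty,1}}$.

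With these in hand the lemma reduces to the tame estimate
\begin{equation}
\norm{vF}_{H^m}\lesssim \norm{v}_{\dot B^0_{\infty,1}}\norm{F}_{H^m}+\norm{v}_{H^m}\norm{F}_{\dot B^0_{\infty,1}},
\end{equation}
for $m\ge0$: substituting the four reductions above turns the right-hand side into exactly $\norm{g}_{\dot B^0_{\infty,1}}\norm{f}_{H^{m+1}}+\norm{g}_{H^m}\norm{f}_{\dot B^1_{\infty,1}}$, and summing over the finitely many scalar products and over $F\in\{\partial_1 f,\partial_2 f,\absnabla f\}$ yields the claim. To prove the tame estimate I would use Bony's decomposition $vF=T_vF+T_Fv+R(v,F)$. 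The two paraproducts are routine: frequency localization forces the output of $T_vF$ (resp.\ $T_Fv$) to frequency $\sim 2^k$ coming from $P_kF$ (resp.\ $P_kv$), so $\norm{P_\ell(T_vF)}_{L^2}\lesssim\norm{v}_{L^\infty}\norm{P_\ell F}_{L^2}$ and a weighted square-summation in $\ell$ give $\norm{T_vF}_{H^m}\lesssim\norm{v}_{L^\infty}\norm{F}_{H^m}\le\norm{v}_{\dot B^0_{\infty,1}}\norm{F}_{H^m}$, and symmetrically $\norm{T_Fv}_{H^m}\lesssim\norm{F}_{\dot B^0_{\infty,1}}\norm{v}_{H^m}$ after estimating $\norm{S_{\ell-1}F}_{L^\infty}\le\norm{F}_{\dot B^0_{\infty,1}}$.

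The hard part is the high-high remainder $R(v,F)=\sum_{|j-k|\le1}P_jv\,P_kF$, whose output cascades down to low frequencies — exactly the borderline regime for landing in $\dot H^m$ as $m\downarrow0$. Here I would use the inhomogeneous decomposition so that all frequencies below one are gathered into a single block, and estimate, for each output block $\ell$,
\begin{equation}
2^{m\ell}\norm{P_\ell R(v,F)}_{L^2}\lesssim \sum_{j\gtrsim\ell}2^{m\ell}\norm{P_jv}_{L^\infty}\norm{\widetilde{P_j}F}_{L^2}\lesssim\sum_{j\gtrsim\ell}2^{-m(j-\ell)}\,a_j\,b_j,
\end{equation}
with $a_j=\norm{P_jv}_{L^\infty}$, $b_j=2^{mj}\norm{\widetilde{P_j}F}_{L^2}$, using $2^{m\ell}\le C\,2^{-m(j-\ell)}2^{mj}$, which holds because $m\ge0$ and $\ell\lesssim j$. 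The right-hand side is a convolution of the geometric weight $2^{-mp}\mathbf 1_{p\ge-c}$ with $(a_jb_j)$, so Young's inequality together with $\ell^\infty\hookleftarrow\ell^1$ yields $\norm{2^{m\cdot}\norm{P_\cdot R}_{L^2}}_{\ell^2}\lesssim_m\norm{(a_j)}_{\ell^1}\norm{(b_j)}_{\ell^2}=\norm{v}_{\dot B^0_{\infty,1}}\norm{F}_{\dot H^m}$, finishing the case $m>0$, where the weight is summable precisely because $m>0$.

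I expect this remainder bookkeeping to be the only genuinely delicate point: the $\ell^1$ summation must be assigned to the $\dot B^0_{\infty,1}$ slot so that the geometric weight absorbs the low-frequency cascade, and this splitting degenerates at the endpoint $m=0$. However, at $m=0$ no paraproduct machinery is needed at all, since $\norm{vF}_{L^2}\le\norm{v}_{L^\infty}\norm{F}_{L^2}\le\norm{v}_{\dot B^0_{\infty,1}}\norm{F}_{L^2}$ directly. Combining the two regimes gives the tame estimate for every $m\ge0$, and the reductions of the first paragraph then deliver the lemma; all remaining ingredients are the symbol calculus for $\nabla^\perp\absnabla^{-1}$ and standard frequency-support and Bernstein bounds.
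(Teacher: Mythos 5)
Your proof is correct and follows essentially the same approach as the paper: your Bony decomposition $T_vF+T_Fv+R(v,F)$ is exactly the paper's frequency trichotomy ($k\sim k_2\geq k_1$, $k\sim k_1\geq k_2$, $k_1\sim k_2\geq k$), and your key reduction --- that $\nabla^\perp(-\Delta)^{-1/2}$ is bounded on frequency-localized $L^\infty$ pieces and hence on $\dot B^{0}_{\infty,1}$ --- is precisely the paper's estimate \eqref{infty-bound-by-Besov}, with both arguments treating $m=0$ directly and relying on $m>0$ for the geometric summation in the high-high case. The only differences are presentational: you first abstract a generic tame product estimate before decomposing, and in the high-high remainder you place $L^\infty$ on $g$ and $L^2$ on $f$ (landing in $\norm{g}_{\dot B^{0}_{\infty,1}}\norm{f}_{H^{m+1}}$), whereas the paper does the reverse (landing in $\norm{g}_{H^m}\norm{f}_{\dot B^{1}_{\infty,1}}$) --- both are admissible.
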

\begin{proof}
\blue{The result is direct when $m=0$, hence we only consider $m>0$. Using the standard paraproduct estimates, \cite[\textcolor{MidnightBlue}{Corollary 2.86}]{Bahouri-Chemin-Danchin-Fourier} for example, we have
\begin{align}
    \|u\cdot \nabla f\|_{H^m}\lesssim_m \|u\|_{L^\infty}\|\nabla f\|_{H^m}+  \|\nabla f\|_{L^\infty}\| u\|_{H^m}.
\end{align}
To deal with the unboundedness of the Riesz transform in $L^\infty$, we use the embedding $\dot{B}^0_{\infty,1} \hookrightarrow L^\infty$ and the fact that this operator is bounded in the smaller space (see e.g.\ \cite[\textcolor{MidnightBlue}{Theorem 1.3}]{Y20-BesovSpaces}). This results in
\begin{align}
    \|u\cdot \nabla f\|_{H^m} & \lesssim \sum_{k}\|P_ku\|_{L^\infty}\|\nabla f\|_{H^m}+ \sum_{k} 2^k\|P_k f\|_{L^\infty}\| u\|_{H^m} \\
    & \lesssim \norm{g}_{\dot{B}^{0}_{\infty,1}} \norm{f}_{H^{m+1}} + \norm{f}_{\dot{B}^1_{\infty,1}}\norm{g}_{H^m}.
\end{align}
The same estimates follow for $u\cdot |\nabla|f$ and this yields the claim of the lemma.}

\end{proof}

\begin{proposition}[Bounds on the bootstrap norm]\label{infty-norm-bound-final} Let \blue{$4 \leq q \leq \infty$} and $\delta > 0$.
\begin{enumerate}
    \item Let $Z_\pm$ be the dispersive unknowns defined in \eqref{def:dispersive-unknowns} that solve \eqref{eqn: BQ on Z+, Z-} with the corresponding initial data $Z_{\pm,0}\in H^{3+\delta}(\R^2)$. Then, there holds
    \begin{align*}
       & \|(Z_+,Z_-)\|_{L_t^1([0,\tau], \blue{B_x})}\\
       &\qquad\leq K_{q,\delta} \tau^{1-\frac{1}{q}} \kappa^{-\frac{1}{q}} \left(\norm{(Z_{+,0},Z_{-,0})}_{H^{2+\delta}_{\blue{x}}}+ \|(Z_+,Z_-)\|_{L_t^1([0,\tau], \blue{B_x})}\norm{(Z_{+},Z_-)}_{L_t^\infty([0,\tau], H^{3+\delta}_{\blue{x}})}\right).
    \end{align*}

\item Let $\theta$ be a solution to \eqref{SQG} with $\theta_0 \in H^{3+\delta}(\mathbb{R}^2)$. Then, there holds
\begin{align*}
    \norm{\theta}_{L_t^1([0,\tau], \blue{B_x})}\leq K_{q,\delta} \tau^{1-\frac{1}{q}} \kappa^{-\frac{1}{q}} \left(\norm{\theta_0}_{H^{2+\delta}_{\blue{x}}}+  \norm{\theta}_{L_t^1([0,\tau], \blue{B_x})} \norm{\theta}_{L_t^\infty([0,\tau], H^{3+\delta}_{\blue{x}})}\right).
\end{align*}
\end{enumerate}
\end{proposition}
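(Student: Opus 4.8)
The plan is to run each dispersive unknown through the Duhamel representation \eqref{intro-Duhamel} and to place every dyadic block $P_k Z_\pm$ into the Strichartz spaces of Theorem \ref{Strichartz-Sobolev}, converting those space-time bounds into the target $L^1_t([0,\tau],B^1_{\infty,1})$ norm by Hölder in time and Bernstein in space. Concretely, writing $\|Z_\pm\|_{B^1_{\infty,1}}\simeq\sum_k(2^k+1)\|P_k Z_\pm\|_{L^\infty}$, I fix a $\tfrac12$-admissible pair $(q,r)$ (so that $\tfrac2r=1-\tfrac4q$) and estimate $\|P_k Z_\pm\|_{L^1_t([0,\tau])L^\infty_x}\leq \tau^{1-1/q}\|P_k Z_\pm\|_{L^q_t L^\infty_x}\lesssim \tau^{1-1/q}2^{2k/r}\|P_k Z_\pm\|_{L^q_t L^r_x}$, using Hölder on $[0,\tau]$ and Bernstein on the frequency-localized block. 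It then remains to bound $\|P_k Z_\pm\|_{L^q_t L^r_x}$ through the two contributions in \eqref{intro-Duhamel}, and the factor $\tau^{1-1/q}$ (with $q\geq 4$ forced by admissibility) is the source of the timescale.

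For the linear term I apply the homogeneous estimate Theorem \ref{Strichartz-Sobolev}\eqref{Strichartz-1}, giving $\|e^{\mp it\Lambda_\kappa}P_k Z_{\pm,0}\|_{L^q_t L^r_x}\lesssim \kappa^{-1/q}2^{4k/q}\|P_k Z_{\pm,0}\|_{L^2}$. The three frequency gains combine to $2^{k}\cdot 2^{2k/r}\cdot 2^{4k/q}=2^{2k}$, so after summing in $k$ the linear contribution is controlled by $\tau^{1-1/q}\kappa^{-1/q}\|Z_{\pm,0}\|_{\dot B^2_{2,1}}$, and $\|\cdot\|_{\dot B^2_{2,1}}\lesssim_\delta\|\cdot\|_{H^{2+\delta}}$ by Cauchy--Schwarz against $2^{-\delta k}$ in the dyadic sum. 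This produces the first term of the claimed bound.

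For the nonlinear term the key is to select the second admissible pair in the inhomogeneous estimate Theorem \ref{Strichartz-Sobolev}\eqref{Strichartz-3} to be the endpoint $(\infty,2)$, i.e.\ $(q_2',r_2')=(1,2)$, so that the forcing $\mathcal{NL}_\pm$ is measured in $L^1_t L^2_x$ and only a single factor $\kappa^{-1/q}$ (coming from $q_1=q$) is spent; this is exactly the improvement that keeps the estimate $L^2$-based. One obtains $\|\int_0^t e^{\mp i(t-s)\Lambda_\kappa}P_k\mathcal{NL}_\pm\,ds\|_{L^q_t L^r_x}\lesssim \kappa^{-1/q}2^{4k/q}\|P_k\mathcal{NL}_\pm\|_{L^1_t L^2_x}$, and the same bookkeeping again yields the weight $2^{2k}$, hence $\tau^{1-1/q}\kappa^{-1/q}\|\mathcal{NL}_\pm\|_{L^1_t\dot B^2_{2,1}}\lesssim \tau^{1-1/q}\kappa^{-1/q}\|\mathcal{NL}_\pm\|_{L^1_t H^{2+\delta}}$. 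I then invoke the product lemma \eqref{product-lemma-final} with $m=2+\delta$, recognizing each quadratic term in \eqref{eqn: BQ on Z+, Z-} as $u\cdot\nabla f$ or $u\cdot(|\nabla|f)$ with $u=\nabla^\perp|\nabla|^{-1}(Z_++Z_-)$ and $f\in\{Z_++Z_-,\,Z_+-Z_-\}$ (the zeroth-order multiplier $|\nabla|^{-1}\mathrm{div}$ in front of one term being harmless on $H^{2+\delta}$), to get $\|\mathcal{NL}_\pm\|_{H^{2+\delta}}\lesssim \|(Z_+,Z_-)\|_{H^{3+\delta}}\|(Z_+,Z_-)\|_{B^1_{\infty,1}}$; pulling the $H^{3+\delta}$ factor into $L^\infty_t$ and integrating the Besov factor in time closes the estimate. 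The $\dot B^0_{\infty,1}$ (weight $1$) part of the bootstrap norm is handled identically with one fewer derivative and is absorbed, and the SQG case is the same verbatim with the single nonlinearity $u\cdot\nabla\theta$, $u=\nabla^\perp(-\Delta)^{-1/2}\theta$.

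The main obstacle, and the crux of the scheme, is the exponent bookkeeping that makes everything line up: one must route the nonlinearity through $L^1_t L^2_x$ — which forces the $(\infty,2)$ endpoint pair — so that the genuinely $L^2$-based product lemma applies while paying only $\kappa^{-1/q}$; a generic pair for $\mathcal{NL}_\pm$ would degrade the $\kappa$-power and hence the stratification timescale. The accompanying difficulty is that the $\ell^1$ dyadic summations intrinsic to the $B^1_{\infty,1}$ and $\dot B^2_{2,1}$ norms cost a $\delta$-derivative, which is precisely what raises the regularity threshold from $H^3$ to $H^{3+\delta}$ and explains the exclusion of $n=3$.
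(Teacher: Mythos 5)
Your proposal is correct and follows essentially the same route as the paper's proof: Duhamel, then Hölder in time plus Bernstein to reduce $L^1_t B^1_{\infty,1}$ to dyadic $L^q_t L^r_x$ bounds, the homogeneous Strichartz estimate for the linear term, the inhomogeneous estimate of Theorem \ref{Strichartz-Sobolev}\eqref{Strichartz-3} at the endpoint pair $(q_2,r_2)=(\infty,2)$ to route the nonlinearity through $L^1_t L^2_x$, and Lemma \ref{product-lemma-final} with $m=2+\delta$ to close. Your exponent bookkeeping (the combined weight $2^{2k}$, the single factor $\kappa^{-1/q}$, and the $\delta$-loss in the dyadic summation) matches the paper's \eqref{eqn:summation-L^2} and its treatment of both the SQG and Boussinesq nonlinearities.
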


\begin{proof}
    We first prove the proposition for the SQG equation, (2), as the notation is more clear and compact. We recall the Duhamel formula:
    \begin{align}\label{eqn:SQG-Duhamel}
        \blue{\theta(t)=e^{-it\Lambda_\kappa}\theta_0 - \int_0^t e^{-i(t-s)\Lambda_\kappa}(u\cdot\nabla\theta(s)) ds.}
    \end{align}
We estimate the two terms on the right-hand side separately. We start with the linear term and use the Bernstein inequality and Theorem \ref{Strichartz-Sobolev}\eqref{Strichartz-1} with $(q,r)$ an $\frac12-$admissible pair:
\begin{equation}\label{eqn:linear-term-proof}
   \begin{split}
     \norm{e^{-it\Lambda_\kappa}\theta_0}_{L_t^1([0,\tau], \blue{B_x})}&\lesssim \tau^{1-\frac1q} \sum_k(2^k+1)\|P_k e^{-it\Lambda_\kappa}\theta_0\|_{L_t^qL^\infty_x}  \\
     &\lesssim \tau^{1-\frac1q}\sum_k (2^{k}+1)2^{\frac{2}{r}k}\|{P_ke^{-it\Lambda_\kappa}\theta_0}\|_{L_t^qL_x^r}\\
        &\lesssim \tau^{1-\frac1q}\kappa^{-\frac1q}\sum_k (2^{k}+1) 2^{(\frac{4}{q}+\frac{2}{r})k}\|P_k\theta_0\|_{L^2}\\
        &\lesssim \tau^{1-\frac1q}\kappa^{-\frac1q}\|\theta_0\|_{H^{2+\delta}}.
\end{split} 
\end{equation}

   The last inequality follows since for any $l > 0$ and $\delta > 0$, there holds
\begin{equation}\label{eqn:summation-L^2}
    \sum_k 2^{lk} \norm{P_k f}_{L^2} \leq \sum_{k \leq 0} 2^{lk} \norm{P_k f}_{L^2} + \sum_{k > 0} 2^{lk}  2^{-(l+\delta)k} \norm{P_k f}_{H^{l+\delta}} \lesssim \norm{f}_{H^{l+\delta}}.
\end{equation} 
   To bound the nonlinear term in \eqref{eqn:SQG-Duhamel}, we proceed as above and then apply Theorem \ref{Strichartz-Sobolev}\eqref{Strichartz-3} with $(q_2,r_2)=(\infty,2)$ as well as Lemma \ref{product-lemma-final} to estimate the product in the last line:
   \begin{align*}
         \| \int_0^t e^{-i(t-s)\Lambda_\kappa}u\cdot \nabla\theta(s)ds&\|_{L_t^1([0,\tau], \blue{B_x})}\\
       &\lesssim \tau^{1-\frac1q}\sum_k (2^k+1)2^{\frac2r k}\|P_k\int_0^t e^{-i(t-s)\Lambda_\kappa}u\cdot\nabla \theta(s)ds\|_{L_t^q([0,\tau], L^r_x)}\\
       &\lesssim \tau^{1-\frac1q}\kappa^{-\frac1q}\sum_{k} (2^k+1)2^{(\frac{4}{q}+\frac2r)k}\|P_k(u\cdot\nabla\theta)\|_{L_t^1L_x^2}\\
       &\lesssim \tau^{1-\frac1q}\kappa^{-\frac1q}\norm{u\cdot\nabla\theta}_{L_t^1H_x^{2+\delta}}\\
    & \lesssim \tau^{1-\frac1q}\kappa^{-\frac1q}\|\theta\|_{L_t^1\blue{B_x}}\|\theta\|_{L_t^\infty H^{3+\delta}_x}.
   \end{align*}
   
  The statement for the Boussinesq system follows in complete analogy using the dispersive variables $Z_\pm$.  Recall that $Z_\pm $ solves \eqref{eqn: BQ on Z+, Z-} and so the Duhamel formula reads
    \begin{align}\label{Z+--in-prop}
        \blue{Z_{\pm}(t)=e^{\mp it\Lambda_\kappa}Z_{\pm,0} + \int_0^t e^{\mp i(t-s)\Lambda_\kappa}\mathcal{NL}_\pm(Z_+,Z_-)ds,}
    \end{align}
    where by \eqref{eqn: BQ on Z+, Z-}
    \begin{align}\label{eqn:nonlinearity-Z_pm}
        \mathcal{NL}_\pm(Z_+,Z_-)&\vcentcolon= \frac{1}{4}\absnabla^{-1}\mathrm{div}\Big(\nabla^\perp \absnabla^{-1}(Z_++Z_-)\cdot \absnabla(Z_++Z_-)\Big)\\
        &\qquad\pm\frac{1}{4}\nabla^\perp\absnabla^{-1}(Z_++Z_-)\cdot\nabla(Z_+-Z_-).
    \end{align} 
    Therefore, in order to bound the dispersive unknowns $Z_\pm$ in the bootstrap norm, we need to control the linear and non-linear contribution from \eqref{Z+--in-prop} for each $Z_\pm$. The linear contribution follows using Theorem \ref{Strichartz-Sobolev}\eqref{Strichartz-1} and the Bernstein inequality:
    \begin{align*}
        \|e^{\mp i t\Lambda_\kappa}Z_{\pm,0}\|_{L_t^1\blue{B_x}}\lesssim \tau^{1-\frac1q}\kappa^{-\frac1q}\sum_{k}(2^{k}+1)2^{(\frac{2}{r}+\frac4q)k}\norm{P_kZ_{\pm,0}}_{L^2_x}\lesssim \tau^{1-\frac1q}\kappa^{-\frac1q}\norm{Z_{\pm,0}}_{H_x^{2+\delta}}.
    \end{align*}
    The bound for nonlinear term follows as in the SQG case using Theorem \ref{Strichartz-Sobolev}\eqref{Strichartz-3} with $(q_2,r_2)=(\infty,2)$ and Lemma \ref{product-lemma-final} (through the identification $(-\Delta)^{-1/2}\simeq \absnabla^{-1}$ in the Fourier space) in the last inequality:
    \begin{align*}
          \|\int_0^te^{\mp i (t-s)\Lambda_\kappa}\mathcal{NL}_\pm ds\|_{L_t^1([0,\tau], \blue{B_x})}&\lesssim \tau^{1-\frac1q}\kappa^{-\frac1q}\sum_{k}(2^{k}+1)2^{(\frac{2}{r}+\frac4q)k}\norm{P_k\mathcal{NL}_\pm}_{L_t^1L^2_x}\\
          &\lesssim \tau^{1-\frac1q}\kappa^{-\frac1q} \sum_{\mu,\nu\in\{+,-\}} \left[ \|{\nabla^\perp\absnabla^{-1}Z_\mu\cdot \absnabla Z_\nu}\|_{L_t^1H^{2+\delta}_x}  \right. \\
          & \hspace{35mm} \left. +\|{\nabla^\perp\absnabla^{-1}Z_\mu\cdot \nabla Z_\nu}\|_{L_t^1H^{2+\delta}_x} \right]\\
          &\lesssim  \tau^{1-\frac1q}\kappa^{-\frac1q} \sum_{\mu,\nu\in\{+,-\}} \|Z_\mu\|_{L_t^1\blue{B_x}}\|Z_\nu\|_{L_t^\infty H^{3+\delta}_x}.
    \end{align*}
    Altogether we obtain the claim 
    \begin{align*}
        \norm{(Z_+,Z_-)}_{L_t^1([0,\tau], \blue{B_x})}\lesssim \tau^{1-\frac1q}\kappa^{-\frac1q}(\|(Z_{+,0},Z_{-,0})\|_{H^{2+\delta}_x}+ \|(Z_{+},Z_{-})\|_{L_t^1([0,\tau], \blue{B_x})}\|(Z_{+},Z_{-})\|_{H^{3+\delta}_x}).
    \end{align*}
    
\end{proof}

% \addtocontents{toc}{\protect\setcounter{tocdepth}{0}}
\section*{Acknowledgments}
The authors would like to thank Klaus Widmayer and Beno\^it Pausader for many insightful conversations.

The first author gratefully acknowledges the support of the SNSF through grant PCEFP2\_203059. The second author was partially supported by NSF grant DMS-2154162, DMS-2452275, and Brown University Graduate School Travel Awards, and appreciates the hospitality of the University of Zurich during his visit, where this work was initiated.

\addtocontents{toc}{\protect\setcounter{tocdepth}{1}}
\bibliographystyle{alpha}
\bibliography{FINAL-bib}

\end{document}